\newtheorem{theorem}{Theorem}
\newtheorem{lemma}[theorem]{Lemma}
\newtheorem{prop}[theorem]{Proposition}
\newtheorem{definition}[theorem]{Definition}
\newtheorem{remark}{Remark}
\newtheorem{corollary}[theorem]{Corollary}
\newtheorem{example}{Example}
\begin{document}

\begin{center}
\Large{On a General Concept of a  Hausdorff-Type Operator}
\end{center}

\centerline{A. R. Mirotin}

\centerline{amirotin@yandex.ru}

\

\textsc{Abstract.} A general approach to the concept of a Hausdorff operator is proposed in such a way that  a number of classical and new operators fit into the given definition.  Conditions  are given for the boundedness of
the operators under consideration  in  Banach function spaces,  Haj{\l}asz-Sobolev spaces, and in the atomic Hardy space  $H^1$,   and their regularity property  is investigated. 
 Examples and special cases  are considered. This approach is aimed  to unify the study of a lot of extensions and analogs of the classical 
Hausdorff operator.

%\footnote{2020 Mathematics Subject Classification: Primary 47B38; Secondary 45P05, 46E30,  46E36.}

\

\

Key words and phrases. Hausdorff operator, isomorphism, automorphism,  locally compact group,  Banach function space,  Haj{\l}asz-Sobolev space,  Hardy space, space of homogeneous   type, homogeneous space of a locally compact group, double coset space.

\

\

\section{ Introduction}\setcounter{equation}{0}

Hausdorff  operator on the semi-axis  was introduced by Garabedian and Rogosinskii in the form
$$
Hf(x)=\int_0^1f(ux)d\mu(u),\ x\in (0,\infty),
$$
where $\mu$ stands for a finite measure on $(0,1]$.
This is  a natural continuous analog of the Hausdorff summation method  (see \cite[Chapter XI]{H}). Here a map $x\mapsto ux$ ($u>0$) is an automorphism of the additive topological semigroup $(0,\infty)$.

After episodic appearances of several  one-dimensional Hausdorff
operators (see, e.g., \cite{X1} and \cite{X2}), the impetus
 for the modern development of this theory was given by the work of Liflyand and  M\'{o}ricz \cite{LM} where   Hausdorff  operators on one-dimensional Hardy space were considered.
The first nontrivial
results in several dimensions appeared  in  \cite{LL} for operators of the form
$$
Hf(x)=\int_{\mathbb{R}^m}\Phi(u)f(A(u)x)du.
$$ 
Here $f:\mathbb{R}^n\to \mathbb{C}$, $A(u)\in {\rm GL}(n,\mathbb{R})$ for a.~e. $u\in \mathbb{R}^m$, $\Phi\in L^1_{\rm loc}(\mathbb{R}^m)$ ($x\in \mathbb{R}^n$ is a column vector). In this case,  the map $x\mapsto A(u)x$  is an automorphism of the additive topological group $\mathbb{R}^n$ if $A(u)\in {\rm GL}(n,\mathbb{R})$.

More information on  the first period of the development of the theory of Hausdorff
operators one can find in the survey   articles  \cite{Ls}, \cite{CFW}.

In resent two  decades different notions of a Hausdorff operator have been suggested (see, e.~g., \cite{LL, Ls, CFW, KL, JMAA, SG, S, MCAOT, MCAOT2, Lie, JOTH} and bibliography therein).
The aim of this work is to work out a general form of a Hausdorff-type operator that includes the types of operators in the aforementioned works as well as some other classes of operators as special cases. The key word in our approach is ``an automorphism''. \footnote{ At the first time, this notion in the definition of a Hausdorff operator appeared in \cite{JMAA}.}
Thus, in our opinion, the unified   approach to the notion of a Hausdorff operator may be as follows.

 Let  $\frak{R}$ and $\frak{S}$ be two sets which are  an isomorphic  objects of some category $\mathfrak{K}$. For instance  $\frak{R}$ and $\frak{S}$ are  endowed with some mathematical structure  (algebraical, topological, analytical, algebraical-topological, order, measure,  etc.) in a sense of N. ~Bourbaki. 
Let $\mathrm{Isom}(\frak{R},\frak{S})$ stands for  the set of all isomorphisms  $\frak{R}\to\frak{S}$ in this category, and $(\Omega,\mu)$ denotes some measure space. Finally, let $A: \Omega\to \mathrm{Isom}(\frak{R},\frak{S})$ be some measurable map (in a sense which will be specified  in each concrete situation,   see, e.g.,  definitions \ref{egree}, \ref{measure}, \ref{mes_Iso}, \ref{munu}, and \ref{measAut} below) defined a.~e. $[\mu]$, and $\Phi$ a given $\mu$-measurable function on $\Omega$.

\begin{definition}\label{general1}
   Let the assumptions mentioned above hold.  {\it A Hausdorff operator} (or a Hausdorff-type operator) acts on a  functions $f:\frak{S}\to V$ (here $V$ is some topological vector space) by the rule
\begin{equation}\label{general}
(\mathcal{H}_{\Phi,A,\mu}f)(x) =(\mathcal{H}_{\Phi,A}f)(x) =\int_{\Omega} \Phi(u)f(A(u)(x))d\mu(u), \ \ x\in \frak{R},
\end{equation}
provided the integral converge in a suitable sense.
\end{definition}

In the rest of the paper we shall consider the case $\frak{R}=\frak{S}$ and denote by $\mathrm{Aut}(\frak{S}):=\mathrm{Isom}(\frak{S},\frak{S})$  the set of all automorphisms of $\frak{S}$ in this category.

The notion of  a Hausdorff-type operator contains as special cases the classical and discrete Hilbert transforms and there generalizations, Hankel operators, Hausdorff  and convolution operators on groups, Hausdorff-Zhu   operators, etc. (see examples below). The author hopes that this approach will allow one to unify the study of a lot of extensions and analogs of the classical 
Hausdorff operator.

The article is organized as follows. Section \ref{Special cases} contains examples that show that a number of known  and new operators fit into Definition \ref{general1}.
In Section \ref{regularity} we generalize the   regularity result of Rogosinskii and Garabedian obtained by this authors for the classical one-dimensional case. Sections  \ref{sec:kote},  \ref{sec:sobolev},
and \ref{H10} are devoted to   Hausdorff-type operators in Banach function spaces, general  Sobolev spaces, and general Hardy spaces respectively. 
We obtain sufficient conditions for the boundedness of Hausdorff-type operators in these spaces if the mathematical structures involved are consistent in a natural way.  
%Section \ref{sec:sobolev} is  devoted to the boundedness Hausdorff-type operators in Sobolev spaces on metric measure spaces.
The section \ref{H10} consists of  four subsections.  The main result of the first subsection is  on the boundedness of a  Hausdorff-type operators on general Hardy spaces $H^1$ (Theorem \ref{H1}).  In the remaining three subsections, Theorem \ref{H1} is applied to important cases of operators on locally compact groups, homogeneous spaces of Lie groups, and double coset  spaces of Lie groups. In this cases some assumptions of this theorem   hold automatically. 
\footnote{  It should be noted that in this cases our conditions of boundedness differ from the conditions given in \cite{JMAA, Lie, JOTH}.}

\section{Special cases}\label{Special cases}\setcounter{equation}{0}

\begin{example}\label{det}  (Hausdorff-type operator over a matrix algebra.) {\rm Let $\frak{S}=\mathrm{Mat}_n(\frak{k})$ be the algebra  of square matrices $M=(m_{ij})$ of order $n$ over a field $\frak{k}$. Every $M\in \mathrm{Mat}_n(\frak{k})$ has the form $M=(m_1,\dots,m_n)$, where $m_j$ stands for the $j$th column of $M$. For each permutation $\sigma\in\mathbf{S}_n$ ($\mathbf{S}_n$ denotes the symmetric group of order $n$) we  denote by $A(\sigma)$ the bijection of  $\mathrm{Mat}_n(\frak{k})$ (automorphism in the category of sets) such that $A(\sigma)(M)=(m_{\sigma(1)},\dots, m_{\sigma(n)})$. We equip the set $\Omega= \mathbf{S}_n$ with the counting measure. Let  $\Phi(\sigma)=\mathrm{sgn}(\sigma)$, where $\mathrm{sgn}( \sigma):=1$ if $\sigma$  is even, and  $\mathrm{sgn}( \sigma):=-1$ otherwise. 
Then a Hausdorff operator in the sense of Definition \ref{general1} acts on the function $f:\mathrm{Mat}_n(\frak{k})\to V$ as
$$
(\mathcal{H}_{\Phi,A}f)(M) =\sum_{\sigma\in \mathbf{S}_n}\mathrm{sgn}( \sigma)f(A(\sigma)(M)).
$$
In particular, if we take $\frak{k}=V=\mathbb{C}$,  $f_0(M)=\prod_{i=1}^n m_{ii}$, then
\begin{eqnarray*}
(\mathcal{H}_{\Phi,A}f_0)(M) &=&\sum_{\sigma\in \mathbf{S}_n}\mathrm{sgn}(\sigma)f_0(A(\sigma)(M))\\
&=&\sum_{\sigma\in \mathbf{S}_n}\mathrm{sgn}(\sigma)f_0(A(\sigma)(m_1,\dots,m_n)).
\end{eqnarray*}
Since the right-hand side here is an alternate multilinear   form (as a function of column vectors $m_1,\dots,m_n$), we have  
$$
(\mathcal{H}_{\Phi,A}f_0)(M) =\mathrm{det}(M)
$$
(see also \cite[p. 202]{Berb}).

One can take also as $\frak{S}$ any  subset of $\mathrm{Mat}_n(\frak{k})$ which is invariant with respect to some family  of automorphisms $(A(\sigma))_{\sigma\in \Sigma}$.
}
\end{example}

\begin{example}\label{DiscreteHilbert}(The Discrete Hilbert transform.) {\rm Let $\frak{S}=\mathbb{Z}$ be the ring of integers  with its natural order, $\Omega=\mathbb{Z}$ endowed with a discrete  measure $\mu(\{k\})=p_k$, and $A(u)(k)=k-u$  ($k,u\in  \mathbb{Z}$) an order preserving bijections of  $\mathbb{Z}$ (automorphisms in the category of linearly ordered sets).
Let
$$\Phi(u)=
\begin{cases}
\frac{2}{\pi u}, u\mbox{  odd}\\
0, u \mbox{  even}.
\end{cases}
$$
 In this case, \eqref{general} takes the form
 $$
 (Hf)(k)=\sum_{u\in \mathbb{Z}}\Phi(u)f(k-u)p_u=
 \begin{cases}
\frac{2}{\pi}\sum\limits_{n \  \mathrm{odd}}\frac{f(n)}{k-n}p_{k-n},  k \mbox{  even}\\
\frac{2}{\pi}\sum\limits_{n \  \mathrm{even}}\frac{f(n)}{k-n}p_{k-n},  k \mbox{  odd}.
\end{cases}
 $$
For $p_k\equiv 1$ this is the discrete Hilbert transform of a   function  $f:\mathbb{Z}\to V$ (for the case $V=\mathbb{C}$ see \cite{Kak}).
}
\end{example}

\begin{example}\label{Hilbert}(The Hilbert transform.) {\rm Let $\frak{S}=\Omega$ be the real line $\mathbb{R}$ with Euclidean metric and Lebesgue measure, $A(u)(x)=x-u$  ($x,u\in  \mathbb{R}$) a distance preserving bijections of  $\mathbb{R}$,
and $\Phi(u)=\frac{1}{\pi u}$, the Cauchy kernel. In this case, \eqref{general} takes the form
$$
(\mathrm{H}f)(x)=\frac{1}{\pi }\mbox{p.v.}\int_{-\infty}^\infty \frac{f(x-u)}{u}du,
$$
the Hilbert transform of a measurable  function  $f:\mathbb{R}\to V$.

 Calder\'{o}n-Zygmund operators can be considered  in a similar manner.

}
\end{example}

The previous example can be generalized in the following way.

\begin{example}\label{GHilbert}(The Hilbert transform along curves,  see \cite{SW, NRW1, NRW3}).  {\rm Let $\frak{S}=\mathbb{R}^n$ with Euclidean metric, $\Omega=\mathbb{R}$ with Lebesgue measure, and $A(u)(x)=x-\gamma(u)$  ($x\in  \mathbb{R}^n$) a distance preserving bijections of  $\mathbb{R}^n$ where $\gamma : \mathbb{R}\to \mathbb{R}^n$ is a suitable function  (say
polynomial) satisfying $\gamma(0) = 0$. Then the singular integral operator
$$
(\mathrm{H}f)(x) :=\int_{-\infty}^\infty \Phi(u) f(x - \gamma(u)) du,
$$
where $\Phi$ is a Calder\'{o}n-Zygmund  kernel, is of the form \eqref{general}.
}
\end{example}

\begin{example}\label{Graph}(Hausdorff-type operators on graphs.)  {\rm Let $\mathcal{G}$ be a countable infinite graph, $\frak{S}=W$ the set of vertices of $\mathcal{G}$ with the counting measure.  Recall that an  automorphism of $\mathcal{G}$ is a bijection $\gamma:W\to W$ such that two vertices $v_1$ and $v_2$ are connected by a wedge if and only if $\gamma(v_1)$ and $\gamma(v_2)$ are connected by a wedge. Let $\Gamma$ be some countable subgroup of the automorphism group $\mathrm{Aut}(\mathcal{G})$  endowed with its natural topology (see \cite{trof}).Then   a Hausdorff operator on $\mathcal{G}$ acts on a   function $f:W\to \mathbb{C}$ as follows
$$
(Hf)(x)=\sum_{\gamma\in \Gamma}c_{\gamma}f(\gamma(x)) 
$$
(provided the series converge), where $c_{\gamma}\in \mathbb{C}$.
An averaging function of a periodic graph has this form (see, e.~g., \cite{Lenz-Pogorzelski-Schmidt}). 
}
\end{example}

From now on      we shall assume that the integral in \eqref{general} exists in the sense of Lebesgue.

\begin{example}\label{Hankel}  (Hankel integral operators.) Let  {\rm $\frak{S}=\mathbb{R}$ with standard metric, $\Omega=\mathbb{R}$ endowed with Lebesgue measure,
and $A(u)(x)=u-x$ ($u\in \mathbb{R}$) the automorphisms of the metric space $\mathbb{R}$. 
For $\varphi\in L^1_{loc}(\mathbb{R})$ consider the Hausdorff-type operator
\begin{align*}
(H_\varphi f)(x)=\int_{\mathbb{R}}\varphi(u)f(u-x)du.
\end{align*}
If we consider  measurable  functions $f$ on  $\mathbb{R}$ supported in $\mathbb{R}_+$ then the operator
\begin{align*}
(\Gamma_\varphi f)(x):=(H_\varphi f)(x)=\int_{\mathbb{R}}\varphi(x+t)f(t)dt=\int_{\mathbb{R}_+}\varphi(x+t)f(t)dt
\end{align*}
 is a Hankel integral operator (see, e.~g., \cite[p. 46]{Pel}).
}
\end{example}

\begin{example}\label{cauchi} (A Hausdorff transform over a circular manifold.) {\rm  Let    $\frak{S}$ be  a complex  submanifold of $\mathbb{C}^n$ with automorphisms  (biholomorphic mappings)  $A(u)(z)=(u_1z_1,\dots, u_n z_n)$, where $u=(u_1,\dots,u_n)\in \mathbb{T}^n$ (e.~g., let $\frak{S}$ be  a Reinchart domain  in $\mathbb{C}^n$, or the torus  $ \mathbb{T}^n$). Let also
 $\Omega= \mathbb{T}^n$ endowed  with the Lebesgue measure,  and 
 $$
 \Phi(u)=\frac{1}{(2\pi \imath)^n}\frac{1}{(u_1-a_1)\dots (u_n-a_n)} 
 $$
(for simplicity we assume that $a_j\notin \mathbb{T},  j=1,\dots,n$).   In this case, \eqref{general} turns into the operator
$$
(\mathcal{H}f)(z)=\frac{1}{(2\pi \imath)^n}\int_{ \mathbb{T}^n}\frac{f(u_1z_1,\dots, u_n z_n)}{(u_1-a_1)\dots (u_n-a_n)}du_1\dots du_n,\  z\in \frak{S}
$$
for a measurable complex function $f$ on  $\frak{S}$. If  $\frak{S}= \mathbb{T}^n$, then  putting $u_j=\zeta_j/z_j$  for $ j=1,\dots,n$ we get
$$
(\mathcal{H}f)(z)=\frac{1}{(2\pi \imath)^n}\int_{ \mathbb{T}^n}\frac{f(\zeta_1,\dots, \zeta_n)}{(\zeta_1-a_1z_1)\dots (\zeta_n-a_nz_n)}d\zeta_1\dots d\zeta_n,\ z\in \mathbb{T}^n.
$$
}
\end{example}

\begin{example}\label{conv}(A convolution with a measure.) {\rm Let $\frak{S}=G=\Omega$ be a multiplicative group equipped with some (left) invariant metric,  $\mathrm{Aut}(\frak{S})= \mathrm{Iso}(G)$ the set of isometries of $G$ (automorphisms in the category of metric spaces),  $A(u)(x)=u^{-1}x$ ($u\in G$), $\Phi(u)=1$. In this case, \eqref{general} turns into a  convolution operator $f\mapsto f\ast\mu$ on $G$.}
\end{example}

\begin{example}\label{Kothe}  (Hausdorff-type operators over measure spaces.)  {\rm Let $(\frak{S}, \mathcal{B}, \nu)$ be a  measure space. 
By an  
automorphism of a space $\frak{S}$ we mean a bijective  map $\tau:\frak{S}\to \frak{S}$ 
 such that both  $\tau$ and  $\tau^{-1}$ are measurable and $\nu(\tau(E))=\nu(E)$ for every $E\in  \mathcal{B}$. Let $X$ be a   function space on $\frak{S}$
 which is 
invariant under a family $(\tau(u))_{u\in \Omega}$ of automorphisms of $\frak{S}$  (i.~e. $f\in X\Rightarrow f\circ \tau(u)^{-1}\in X\forall u\in\Omega$, see, e.~g., \cite{LiTz}). Hausdorff  operator on $X$ takes the form
\begin{equation}\label{generalKothe}
(\mathcal{H}_{\Phi,\tau}f)(x) =\int_{\Omega} \Phi(u)f(\tau(u)^{-1}(x))d\mu(u),\ \ f\in X.
\end{equation}
 Theorem \ref{Kote}  below gives conditions for boundedness of this operator in some Banach function spaces.

The following  very special case of an operator of the form \eqref{generalKothe} arises in the ergodic theory:
$$
(Hf)(x)=\frac 1t\int_0^tf(T(u)(x))du.
$$
Here $\Omega=[0,t]$, $\mu$ is the Lebesgue measure on $[0,t]$, $\Phi(u)\equiv 1/t$, and $T(u)$ stands for some one-parameter group of  automorphisms of a  measure space $\frak{S}$ (a flow).
}
\end{example}

\begin{example}\label{topgroup}  (Hausdorff operator over a topological group.) {\rm  Let $\frak{S}=G$ be a topological group,  $\mathrm{Aut}(\frak{S})=\mathrm{Aut}(G)$ the group of all topological automorphisms of $G$. In this  case, we have  got a general definition of  a Hausdorff operator on topological groups.  This example contains several known definitions of Hausdorff operators for classical groups (see subsection \ref{group},  \cite{JMAA}, \cite{Nachr}, and examples therein).

For example, the  Harish-Chandra transform leads to a Hausdorff operator on the group $\mathrm{SL}(2,\mathbb{R})$.
In fact, one of the form of the Harish-Chandra transform for   $\mathrm{SL}(2,\mathbb{R})$  looks as  $H^Kf(x)=2\imath\sin\theta(x)(\mathbf{H}f)(x)$  where the matrix $x\in \mathrm{SO}(2,\mathbb{R})$ represents the rotation of the plane by an angle $\theta(x)$. Here $\mathbf{H}$ stands for a Hausdorff  operator
$$
(\mathbf{H}f)(x)=\int_{A^+}\frac{\alpha(u)+\alpha(u^{-1})}{2}f(u^{-1}xu)d\mu(u),
$$
 where $A^+$ denotes the set of $2\times 2$ matrices of the form $u=\mathrm{diag}(a,a^{-1})$, $a\ge 1$, $\alpha(u)=a^2$, and $d\mu(u)=da$  (see, e.g., \cite[Chapter VII, \S 5]{Leng}).
}
\end{example}

\begin{example}\label{delsarte} (Generalized   shift operator of Delsarte \cite{Delsarte}, \cite[Chapter I, \S 2]{Lev}). {\rm Let $G$ be a topological group and $\Omega$  a compact subgroup of $\mathrm{Aut}(G)$ with normalized Haar measure $\mu$. The generalized   shift operator of Delsarte is as follows
$$
T^xf(h)=\int_{\Omega} f(hu(x))d\mu(u)\quad (x,h\in G).
$$
This is a composition of the usual shift $f(x)\mapsto f(hx)$ on $G$ and a Hausdorff operator $g\mapsto T^{(\cdot)}g(e)$ over $G$ in a sense of the example \ref{topgroup} (in this case  $\Phi(u)\equiv 1$, $A(u)=u$).}
% (see also Example 3 below).
\end{example}

\begin{example}\label{discrete} (Discrete Hausdorff operator over the Euclidean space.) {\rm  Let $\frak{S}=\mathbb{R}^d$ be a $d$-dimensional Euclidean space considered as an additive topological group. Then   the group $\mathrm{Aut}(\mathbb{R}^d)$ of all topological automorphisms of $\mathbb{R}^d$ can be identified with the general linear group $\mathrm{GL}(d,\mathbb{R})$. Let $\Omega=\mathbb{Z}$ be endowed with the counting measure. In this case, \eqref{general} turns into a so-called discrete Hausdorff operator
\begin{equation*}%\label{discr}
(\mathcal{H}_{\Phi,A}f)(x) =\sum_{k\in \mathbb{Z}} \Phi(k)f(A(k)x),
\end{equation*}
where $A(k)\in \mathrm{GL}(d,\mathbb{R})$, $x\in \mathbb{R}^d$ is a column vector. For the spectral theory of such operators see \cite{JMS, MMAS}.

For example, in quantum mechanics  the expansion of the wave function $\Psi$ into components for a three body system under some conditions takes the form
$$
\Psi(x)=f(x)+f(P^+x)+f(P^-x),\ x\in \mathbb{R}^3
$$
where $P^{\pm}$ denote matrices of operators of cyclic permutation of particles and $f$ stands for the first component of the   wave function (see \cite[Formula (7.24)]{MF}).

Discrete Hausdorff operators appeared also in analysis, see, e.~g.,  \cite{makarov}, and  functional differential equations, see, e.~g.,   \cite{RSPA}, \cite{Ross},    \cite{Ross2}, \cite{Zaidi-Van Brunt-Wake}.
}
\end{example}

\begin{example}\label{homogen}  (Hausdorff-type operator over a homogeneous space.) {\rm Let $\frak{S}=G/K$ be a homogeneous space of a  locally compact group $G$, $K$ a compact subgroup of $G$.  In this case, $\mathrm{Aut}(\frak{S})$ can be identified with some factor-group of  the group  $\mathrm{Aut}_K(G)$ of all topological automorphisms of $G$ which map $K$ onto itself and we get a Hausdorff operator over a homogeneous space (see subsection \ref{homogen} below and \cite{homogen}, \cite{Lie}).}
\end{example}

\begin{example}\label{doubl}  (Hausdorff-type operator over  a double coset space of a  topological group.)  {\rm Let $\frak{S}=G/\!/K$ be a double coset space of a  locally compact group $G$, $K$ a  compact subgroup of $G$.  In this case again $\mathrm{Aut}(\frak{S})$ can be identified with  some factor-group of  the group  $\mathrm{Aut}_K(G)$ of all topological automorphisms of $G$ which map $K$ onto itself  (see subsection \ref{double} below and \cite{JOTH} for details).}
\end{example}

\begin{example}\label{disc}   (Hausdorff-type operators over  the unit disc         and over the ball in $\mathbb{C}^n$.)  {\rm Let $\frak{S}=\mathbb{D}$ be the unite disc in the complex plane with its natural analytic structure, $\mathrm{Aut}(\frak{S})=\mathrm{Aut}_0(\mathbb{D})$ the group of all involutive  M${\ddot {\rm o}}$bius automorphisms of $\mathbb{D}$, 
$$
A(u)(z)=\frac{u-z}{1-\bar{u}z}, \  \ u\in \Omega:=\mathbb{D}. 
$$
In this case, \eqref{general} turns into a so-called Hausdorff-Zhu operator in the disc (see \cite{BONET, MCAOT,  MCAOT2, KM, KGM}).

 A similar construction works if $\frak{S}$ is the unit ball in $\mathbb{C}^n$ \cite{KarMirMMAS}.
 
 }
\end{example}

\begin{example}\label{Cplus}  (Hausdorff operators over  the  half-plane or the whole complex plane.)
 {\rm Let $\mathbb{C}^+$ be the upper half-plane of the complex plane with its natural analytic structure, $\frak{S}=(\mathbb{C}^+)^n$ , $\Omega=(0,\infty)^n$, and $A(u)(z)=(\frac{z_1}{u_1},\dots,\frac{z_n}{u_n})$ ($u\in\Omega$) a biholomorphic map of $(\mathbb{C}^+)^n$. In the case $n=1$, \eqref{general} turns into a so called Hausdorff operator over the  upper half-plane (see \cite{BONET, S, SG, Hung} and the bibliography therein). 
 One can to generalize this example using the whole group of  M${\ddot {\rm o}}$bius automorphisms of the upper half-plane.
 
  A similar approach works for the  Fock space and other spaces of entire functions (see \cite{blasco, BONET, SG} and the bibliography therein).}
\end{example} 

\begin{example}\label{Hecke}  (Hecke operator.)   {\rm Let $\mathbb{C}^+$ be the upper half-plane  with its natural analytic structure, $\frak{S}=\mathbb{C}^+$, $m$ and $k$ are natural numbers,
$$
\Omega=\{u\in \mathbb{N} \times\mathbb{Z_+} \times\mathbb{N}: u= (a,b,d),  ad=m, 0\le b<d\}
$$
with the counting measure, $A(u)(z)=(az+b)d^{-1}$ for $z\in \mathbb{C}^+$, $\Phi(u)=m^{k-1}d^{-k}$  ($u\in \Omega$). 
Then  \eqref{general} turns into the Hecke operator
$$
T_mf(z)=m^{k-1}\sum_{(a,b,d)\in\Omega}d^{-k}f\left(\frac{az+b}{d}\right)
$$
(in number theory  $f$ stands for a modular form of weight $k$, see, e.~g., \cite{Serre}).
}
\end{example}

\

\section{The regularity property of Hausdorff-type operators}\label{regularity}\setcounter{equation}{0}

To examine the  regularity property of the transformation $\mathcal{H}_{\Phi,A}$ we need the following definition. 

\begin{definition}\label{filter}
 Let $\mathcal{F}$ be a filter  on a set $\frak{S}$. We say that  a family $(A(u))_{u\in \Omega}$ of automorphisms of $\frak{S}$
\textit{agrees with the filter } $\mathcal{F}$  if  $A(u)^{-1}(B)$ belongs to $\mathcal{F}$   for each $B\in \mathcal{F}$ and  every $u\in \Omega$.
\end{definition}

The next proposition is a wide generalization of the classical result of Gerabedyan and Rogosinskii (see \cite[Chapter XI]{H}).

\begin{prop}\label{prop2}
Suppose that the conditions of Definition \ref{general1} are fulfilled and a filter  $\mathcal{F}$ on $\frak{S}$ has a countable base. Let a family $(A(u))_{u\in \Omega}$ of automorphisms of $\frak{S}$ agrees with  $\mathcal{F}$ and
\begin{equation}\label{norm}
\int_{\Omega} \Phi(u)d\mu(u)=1.
\end{equation}
Then the transformation $\mathcal{H}_{\Phi,A}$  is
regular in the following sense. For every Banach space $V$ and for every bounded   function $f:\frak{S}\to V$ such that the function $u\mapsto f(A(u)(x))$ is $\mu-$measurable for each $x\in \frak{S}$ the equality  $\lim_{x,\mathcal{F}} f(x) = l$    implies $\lim_{x,\mathcal{F}}(\mathcal{H}_{\Phi,A} f)(x) = l$.

\end{prop}

\begin{proof}

Let  $\lim_{x,\mathcal{F}} f(x) = l$. Then 
\begin{equation}\label{lim}
\lim_{x,\mathcal{F}} f(A(u)(x)) = l
\end{equation}
for all $u\in \Omega$. Indeed, for every $\varepsilon>0$ there exists such $B_\varepsilon \in \mathcal{F}$ that $\|f(y)-l\|<\varepsilon$ for all $y\in B_\varepsilon$. It follows that $\|f(A(u)(x))-l\|<\varepsilon$ for all $x\in A(u)^{-1}(B_\varepsilon)$, as well. By the Definition \ref{filter} we have  $A(u)^{-1}(B_\varepsilon)\in \mathcal{F}$   for each  $u\in \Omega$ and \eqref{lim} follows.  Now by the Lebesgue Theorem for the Bochner integral (one can apply the Lebesgue Theorem, since the  base of $\mathcal{F}$ is  countable) one has
$$
\lim_{x,\mathcal{F}}(\mathcal{H}_{\Phi,A} f)(x) =\int_\Omega \Phi(u)ld\mu(u)= l.
$$
\end{proof}

\begin{example}\label{topgroup3} {\rm  Let in the Example \ref{topgroup}   $G=\cup_{n=1}^\infty K_n$ be sigma-compact ($K_n$ is an increasing sequence of compact subsets of $G$) with the Haar measure $\mu$ and  $\mathcal{F}$ be the filter whose base consists of  all complements  $G\setminus  K_n$ where $n\in\mathbb{N}$.        In this case, all the conditions of Definition \ref{filter}      are fulfilled for every topological automorphosm $A$ of $G$, and   $\lim_{x,\mathcal{F}} f(x) = \lim_{x\to\infty} f(x)$. Assume that the family   of topological  automorphisms $(A(u))_{u\in \Omega}$ of $G$ is $\mu$-$\nu$  measurable in a sense of Definition \ref{munu} below.
 Then the Proposition \ref{prop2} implies that under the condition \eqref{norm} one has $\lim_{x\to \infty}(\mathcal{H}_{\Phi,A}f)(x)=l$ whenever $\lim_{x\to \infty}f(x)=l$ for a bounded measurable complex function $f$ on $G$.

}
\end{example}

\section{ Boundedness of Hausdorff-type operators on  rearrangement-invariant Banach function spaces.}
\label{sec:kote}\setcounter{equation}{0}

In this section, $(\frak{S}, \mathcal{B}, \nu)$ stands for a  measure space with a positive measure $\nu$.

\subsection{The case of $L^p$ spaces}\label{Lp0}\setcounter{equation}{0}

To formulate a result on the $L^p$ boundedness of the operator \eqref{general} we need the following notion.
 
\begin{definition}\label{egree} Let $(\frak{S}, \mathcal{B}, \nu)$ be a   measure space. We say that the family $(A(u))_{u\in \Omega}$ of automorphisms of $\frak{S}$
\textit{agrees with the measure} $\nu$   if for each $E\in \mathcal{B}$ of finite measure and for every $u\in \Omega$ we have $A(u)^{-1}(E)\in \mathcal{B}$ and 
\[
\nu(A(u)^{-1}(E))= m(A(u))^{-1}\nu(E)
\]
for some positive $\mu$-measurable function $u\mapsto m(A(u))$.
\end{definition}

\begin{prop}\label{Lp} Let  the measure $\nu$ be   sigma-finite, the family $(A(u))_{u\in \Omega}$ agrees with $\nu$,  and $1\leq p\le \infty$. If 
$$ 
  \|\Phi\|_{A,p}:=\int_\Omega |\Phi(u)|m(A(u))^{-1/p}d\mu(u)<\infty
$$
(here $ \|\Phi\|_{A,\infty}:=\|\Phi\|_{L^1(\mu)}$),  then the operator 
$\mathcal{H}_{\Phi,A}$ is bounded in $L^p(\nu)$ and its norm does not exceed $ \|\Phi\|_{A,p}$.
\end{prop}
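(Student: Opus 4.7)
The plan is to reduce the estimate to two classical ingredients: Minkowski's integral inequality (to pull the $L^p$-norm inside the $u$-integral) and a change-of-variables identity under $A(u)$ (supplied precisely by Definition \ref{egree}).

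First I would bound
\[
\|\mathcal{H}_{\Phi,A}f\|_{L^p(\nu)} = \Bigl\|\int_\Omega \Phi(u)\, f(A(u)(\cdot))\,d\mu(u)\Bigr\|_{L^p(\nu)} \le \int_\Omega |\Phi(u)|\,\|f\circ A(u)\|_{L^p(\nu)}\,d\mu(u)
\]
by Minkowski's integral inequality in the form for vector-valued integrals (valid for $1\le p\le\infty$). This presupposes enough measurability of $(u,x)\mapsto \Phi(u)f(A(u)(x))$, which I would either assume implicitly as part of the ``measurable map'' hypothesis in Definition \ref{general} or note as a standing assumption.

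Next I would compute the inner norm by change of variables. The hypothesis that $(A(u))$ agrees with $\nu$ means $A(u)^{-1}$ maps measurable sets of finite measure to measurable sets, with $\nu(A(u)^{-1}(E))=m(A(u))^{-1}\nu(E)$. Equivalently, the push-forward satisfies $A(u)_*\nu = m(A(u))^{-1}\nu$. Starting from indicator functions and extending through simple functions by monotone convergence, one obtains
\[
\int_{\frak{S}} |f(A(u)(x))|^p\,d\nu(x) = m(A(u))^{-1}\int_{\frak{S}} |f(y)|^p\,d\nu(y),
\]
so $\|f\circ A(u)\|_{L^p(\nu)} = m(A(u))^{-1/p}\|f\|_{L^p(\nu)}$ for $1\le p<\infty$. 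For $p=\infty$ the measure-zero-preserving property gives $\|f\circ A(u)\|_{L^\infty(\nu)}=\|f\|_{L^\infty(\nu)}$, matching the convention $m(A(u))^{-1/\infty}=1$ and $\|\Phi\|_{A,\infty}=\|\Phi\|_{L^1(\mu)}$.

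Substituting back yields
\[
\|\mathcal{H}_{\Phi,A}f\|_{L^p(\nu)} \le \|f\|_{L^p(\nu)}\int_\Omega |\Phi(u)|\,m(A(u))^{-1/p}\,d\mu(u) = \|\Phi\|_{A,p}\|f\|_{L^p(\nu)},
\]
which is the claimed bound on both the $L^p$-boundedness and the operator norm. The main (and only mildly non-trivial) obstacle is justifying the change of variables at the level of general non-negative $\nu$-measurable $|f|^p$ from the set-level identity in Definition \ref{egree}; this is a routine approximation argument, but it is the one place where the agreement hypothesis is genuinely used. Everything else is a direct application of Minkowski's inequality.
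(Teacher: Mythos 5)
Your proposal is correct and follows essentially the same route as the paper: Minkowski's integral inequality to pull the $L^p(\nu)$-norm inside the $\mu$-integral, followed by the change-of-variables identity $\|f\circ A(u)\|_{L^p(\nu)}=m(A(u))^{-1/p}\|f\|_{L^p(\nu)}$ obtained from Definition \ref{egree} by checking indicators and approximating. The paper treats $p=1$ via Fubini and $p=\infty$ as obvious, which matches your handling of the endpoint cases.
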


\begin{proof}
Using Minkowskii integral
inequality  we have for $1< p<\infty$ and $f\in L^p(\nu)$
\begin{eqnarray*}
\|\mathcal{H}_{\Phi, A}f\|_{L^p(\nu)}&=&\left(\int_{\frak{S}} \left|\int_\Omega \Phi(u)f(A(u)(x))d\mu(u) \right|^pd\nu(x)\right)^{1/p}\\
&\leq&
\int_\Omega\left(\int_{\frak{S}}|\Phi(u)|^p|f(A(u)(x))|^pd\nu(x)\right)^{1/p}d\mu(u)\\
&=&\int_\Omega|\Phi(u)|\left(\int_{\frak{S}}|f(A(u)(x))|^pd\nu(x)\right)^{1/p}d\mu(u).
\end{eqnarray*}
Since  by the Definition \ref{egree}
\begin{equation}\label{IfA(u)}
\int_{\frak{S}}|f(A(u)(x))|^pd\nu(x)= m(A(u))^{-1}\int_{\frak{S}}|f(x)|^pd\nu(x)
\end{equation}
(it suffices to verify the last equality for $f=\chi_E$, the indicator of a $\nu$-measurable set $E\subseteq \frak{S}$ of finite measure),
we have
\begin{eqnarray*}
\|\mathcal{H}_{\Phi, A}f\|_{L^p(\nu)}&\leq&\int_\Omega|\Phi(u)|m(A(u))^{-1/p}d\mu(u)\left(\int_{\frak{S}}|f(x)|^pd\nu(x)\right)^{1/p}\\
&=&
\|\Phi\|_{A,p}\|f\|_{L^p(\nu)}.
\end{eqnarray*}
For $p=1$ the statement of the proposition follows from Fubini Theorem and for $p=\infty$ it is obvious.
\end{proof}

\begin{example}\label{topgroup2} {\rm  Let in the Example \ref{topgroup}  $G$ be locally compact,    $\nu$  the Haar measure of $G$, and  the   family $\mathrm{Aut}(G)$  of all topological automorphisms of a  group  $G$ is equipped   with its natural (Braconier)  topology (see, e.~g., \cite[(26.1)]{HiR}).    Assume that the map $u\mapsto  A(u)$ from $\Omega$ to $\mathrm{Aut}(G)$ is measurable with respect to the measure $\mu$ in $\Omega$ and the Borel structure in $\mathrm{Aut}(G)$.
  In this case all the conditions of Definition \ref{egree}      are fulfilled for $(A(u))_{u\in \Omega}$.  Indeed,  we have  $m(A(u))={\rm mod}(A(u))$,       the modulus of $A(u)$,  and the map $A\mapsto {\rm mod}(A)$ from  $\mathrm{Aut}(G)$ to $(0,\infty)$ is continuous (see \cite[(26.21)]{HiR}). Thus, the family $(A(u))_{u\in \Omega}$  agrees with the measure $\nu$ and the Proposition \ref{Lp} is applicable if $G$ is sigma-compact. 
}

\end{example}

\subsection{The general case} (See Example \ref{Kothe}.)  We accept the notion of  a rearrangement invariant  Banach 
 function space on $\frak{S}$ in the sense of \cite{BS}.

\begin{definition}\label{measure} 
Let $(\frak{S}, \mathcal{B}, \nu)$ be a   measure space. We endow the set $\mathrm{Aut}(\frak{S})$ of all automorphisms of this space  by the smallest Borel structure (sigma-algebra) $\mathfrak{A}$ such that all maps $\mathrm{Aut}(\frak{S})\to \frak{S}$, $\tau\mapsto \tau(x)$ are measurable for all  $x\in \frak{S}$. We call a family  $(\tau(u)^{-1})_{u\in \Omega}\subseteq \mathrm{Aut}(\frak{S})$ measurable, if the map $\Omega\to \mathrm{Aut}(\frak{S})$, $u\mapsto \tau(u)^{-1}$ is measurable.
\end{definition}

In the rest of the article we will use the following lemma, which has proven to be a convenient tool for solving problems of boundedness of Hausdorff-type operators in various function spaces
(in addition to the results below, see, e.g.,   \cite{KM}, \cite{MCAOT, MCAOT2}).

\begin{lemma}\label{lm1} \cite[Lemma 2]{JMAA} \textit{Let $(\frak{S};\nu)$ be a measure space,
$\mathcal{F}(\frak{S})$
 some Banach space of $\nu$-measurable functions on $\frak{S}$,  $(\Omega,\mu)$ a $\sigma$-compact quasi-metric space with  positive Radon measure $\mu$, and $F(u, x)$ a function on $\Omega\times \frak{S}$. Assume that}

(a) \textit{the convergence of a sequence in norm in $\mathcal{F}(\frak{S})$ yields the convergence of some
subsequence to the same function for $\nu$-a.~e. $x\in \frak{S}$; }

(b)  \textit{$F(u, \cdot) \in \mathcal{F}(\frak{S})$ for $\mu$-a.~e. $u\in \Omega$;}

 (c) \textit{the map  $u\mapsto F(u, \cdot):\Omega\to \mathcal{F}(\frak{S})$ is Bochner integrable with respect to } $\mu$.

  \textit{Then for $\nu$-a.~e. $x\in  \frak{S}$ one has}
$$
\left((B)\int_\Omega F(u, \cdot)d\mu(u)\right)(x)=
\int_\Omega F(u, x)d\mu(u)
$$
($(B)$ stands for the Bochner integral in $\mathcal{F}(\frak{S})$).
\end{lemma}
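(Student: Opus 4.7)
The plan is to approximate $u \mapsto F(u,\cdot)$ by $\mathcal{F}(\frak{S})$-valued simple functions, where the identity is evident by linearity, and then pass to the limit on both sides using hypotheses (a) and (c).

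By the Bochner integrability hypothesis (c), one can select simple functions $F_n(u,\cdot) = \sum_{k=1}^{N_n} \chi_{E_{n,k}}(u)\,g_{n,k}$, with $g_{n,k} \in \mathcal{F}(\frak{S})$ and $\mu(E_{n,k}) < \infty$, satisfying $\int_\Omega \|F_n(u,\cdot) - F(u,\cdot)\|_{\mathcal{F}}\,d\mu(u) \to 0$. After extracting a subsequence I may arrange the stronger telescoping bound $\sum_n \int_\Omega \|F_n(u,\cdot) - F(u,\cdot)\|_{\mathcal{F}}\,d\mu(u) < \infty$; Tonelli's theorem then forces $\sum_n \|F_n(u,\cdot) - F(u,\cdot)\|_{\mathcal{F}} < \infty$ for $\mu$-a.e.\ $u$, so in particular $F_n(u,\cdot) \to F(u,\cdot)$ in $\mathcal{F}$ for such $u$.

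For each simple $F_n$ the claimed identity is immediate, since both sides coincide with the finite sum $\sum_k \mu(E_{n,k})\,g_{n,k}(x)$ for every $x \in \frak{S}$. By the defining property of the Bochner integral one has $(B)\int_\Omega F_n(u,\cdot)\,d\mu(u) \to (B)\int_\Omega F(u,\cdot)\,d\mu(u)$ in the norm of $\mathcal{F}(\frak{S})$, and hypothesis (a) then produces a further subsequence along which
\[
\int_\Omega F_n(u,x)\,d\mu(u) \longrightarrow \Bigl((B)\int_\Omega F(u,\cdot)\,d\mu(u)\Bigr)(x)
\]
for $\nu$-a.e.\ $x \in \frak{S}$.

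The main obstacle is the scalar-side interchange: showing that for $\nu$-a.e.\ $x$ the left-hand side above also converges to $\int_\Omega F(u,x)\,d\mu(u)$. For each fixed $u$ in the full-measure set constructed above, hypothesis (a) applied to the sequence $F_n(u,\cdot) - F(u,\cdot) \to 0$ in $\mathcal{F}$ supplies a subsequence with $F_n(u,x) \to F(u,x)$ $\nu$-a.e., but the exceptional null set a priori depends on $u$. To remove this dependence I would apply Fatou/Tonelli to the nonnegative, jointly measurable function $(u,x) \mapsto \sum_n |F_n(u,x) - F(u,x)|$: a Riesz--Fischer-type consequence of (a) bounds its $\mathcal{F}$-norm in $x$ by $\sum_n \|F_n - F\|_{L^1(\mu;\mathcal{F})}$, so after integrating in $u$ this function is $\mu$-integrable for $\nu$-a.e.\ $x$. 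Dominated convergence in $u$ then yields $\int_\Omega F_n(u,x)\,d\mu(u) \to \int_\Omega F(u,x)\,d\mu(u)$ for $\nu$-a.e.\ $x$, and matching this limit with the one of the preceding paragraph gives the claimed identity.
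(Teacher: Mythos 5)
The paper itself contains no proof of Lemma \ref{lm1}: it is imported verbatim from \cite[Lemma 2]{JMAA}, so there is nothing in the text to compare against line by line. Judging your argument on its own merits: the skeleton is the standard one (approximate by simple functions, for which the identity is trivial; use (a) to pass to an a.e.-convergent subsequence on the Bochner side), and everything up to and including the convergence $\int_\Omega F_n(u,x)\,d\mu(u)\to\bigl((B)\int_\Omega F(u,\cdot)\,d\mu(u)\bigr)(x)$ for $\nu$-a.e.\ $x$ is sound.

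The genuine gap is exactly where you flag ``the main obstacle,'' and the repair you sketch does not work. You invoke ``a Riesz--Fischer-type consequence of (a)'' to bound the $\mathcal{F}$-norm of $x\mapsto\sum_n|F_n(u,x)-F(u,x)|$ by $\sum_n\|F_n-F\|_{L^1(\mu;\mathcal{F})}$. But (a) is purely qualitative: it yields no inequality between the $\mathcal{F}$-norm of a pointwise absolute value (or a sum of absolute values) and the $\mathcal{F}$-norms of the summands, and $\mathcal{F}$ is not assumed to be a Banach lattice --- in the intended application $\mathcal{F}=H^{1,q}(\frak{S})$ the function $|f|$ in general does not even belong to $\mathcal{F}$, since it destroys the cancellation condition. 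Moreover, even granting such a bound for each fixed $u$, it controls a norm in the $x$-variable for $\mu$-a.e.\ $u$, whereas what the dominated convergence step requires is $\int_\Omega\sum_n|F_n(u,x)-F(u,x)|\,d\mu(u)<\infty$ for $\nu$-a.e.\ $x$; passing from the former to the latter is itself an interchange of the very type the lemma asserts, so the step is circular. Note also that your argument never establishes that $\int_\Omega F(u,x)\,d\mu(u)$ exists for $\nu$-a.e.\ $x$, which is implicit in the conclusion. The standard way to close the gap is an additional quantitative link between $\mathcal{F}$ and $L^1(\nu)$: if $\|g\|_{L^1(\nu)}\le C\|g\|_{\mathcal{F}}$ --- which does hold for $H^{1,q}(\frak{S})$, as the paper records --- then $\int_{\frak{S}}\int_\Omega|F_n(u,x)-F(u,x)|\,d\mu(u)\,d\nu(x)\le C\int_\Omega\|F_n(u,\cdot)-F(u,\cdot)\|_{\mathcal{F}}\,d\mu(u)\to0$, so by Tonelli $\int_\Omega|F_n(u,x)-F(u,x)|\,d\mu(u)\to0$ in $L^1(\nu)$ and hence $\nu$-a.e.\ along a subsequence; this simultaneously yields the scalar-side limit and the existence of the right-hand integral. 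With only (a)--(c) as you use them, your argument does not reach the conclusion.
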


\begin{theorem}\label{Kote} 
Let $\Omega$
 be a $\sigma$-compact quasi-metric space with  Radon
measure $\mu$. Let $(\frak{S}, \mathcal{B}, \nu)$ be a complete $\sigma$-finite separable  measure space, and $X$ be  a rearrangement-invariant Banach 
 function space on $\frak{S}$ with an absolute continuous norm which is 
invariant under all  automorphisms of $\frak{S}$.  If a  family  $(\tau(u)^{-1})_{u\in \Omega}$ of automorphisms of $\frak{S}$ is measurable (see the  Definition \ref{measure} above), then the corresponding  operator $\mathcal{H}_{\Phi,\tau}$  in the Example \ref{Kothe} is bounded in $X$ provided $\Phi\in L^1(\mu)$ and its norm does not exceed $C_X\|\Phi\|_{L^1(\mu)}$ where the constant $C_X$ depends on $X$ only. Conversely, if this operator is bounded in $X$ and $1\in X$, then $\Phi\in L^1(\mu)$.
\end{theorem}

\begin{proof} We shall verify the conditions of Lemma \ref{lm1} where  $\mathcal{F}(\frak{S})=X$, $F(u,x)=\Phi(u)f(\tau(u)^{-1}(x))$.

(a) By \cite[Chapter I, Theorem 1.4]{BS} the convergence in $X$ implies the convergence  of some
subsequence to the same function for $\nu$-a.~e. $x\in \frak{S}$.

(b) This follows from the invariance of $X$.

(c)  Since $X$ is separable \cite[Chapter I, Corollary 5.6]{BS}, to verify  that the $X$-valued function $u\mapsto F(u,\cdot)$ is strongly $\mu$-measurable it suffices to prove that the $X$-valued function $u\mapsto f\circ \tau^{-1}(u)$  is weakly $\mu$-measurable.
To do this, note that every linear bounded functional on $X$ has the form
$$
L_g(f)=\int_{\frak{S}}f(x)g(x)d\nu(x)
$$
for some measurable function $g$ on $\frak{S}$ \cite[Chapter I,, Corollary 4.3]{BS}. Thus, if  a family  $(\tau(u)^{-1})_{u\in \Omega}$  is measurable, the function 
$$
u\mapsto L_g( f\circ \tau^{-1}(u))=\int_{\frak{S}}f\circ \tau(u)^{-1}(x)g(x)d\nu(x)
$$
is measurable, too, since for each $x\in \frak{S}$ the map $u\mapsto f\circ \tau(u)^{-1}(x)$  is measurable as a composition of measurable mappings.  
Next,
it is known  \cite{Lux} (see also \cite[p. 115] {LiTz})
  that  the   family of  operators of the form $U_\tau f=f\circ \tau(u)^{-1}$ is  uniformly bounded in  $X$, i.~e.  $\|U_\tau\|_{X\to X}\le C_X$  where the constant $C_X$ depends on $X$ only. Since $\|\Phi (u)f\circ \tau(u)^{-1}\|_X\le C_X|\Phi(u)|\|f\|_X$
  and $\Phi\in L^1(\mu)$, this implies (c). 

Thus, by  Lemma \ref{lm1},
\begin{equation*}%\label{B}
\mathcal{H}_{\Phi,\tau}f =\int_{\Omega} \Phi(u)f\circ \tau(u)^{-1}d\mu(u)
\end{equation*}
(the Bochner integral for $X$), and therefore
\begin{eqnarray*}
\|\mathcal{H}_{\Phi,\tau}f\|_{X}& \le&\int_{\Omega} |\Phi(u)|\|U_{\tau(u)}f\|_X d\mu(u)\\
&\le&C_X\|\Phi\|_{L^1(\mu)}\|f\|_X.
\end{eqnarray*}

The last statement of the theorem is obvious. 
\end{proof}

\section{ Boundedness of Hausdorff-type operators on  Sobolev spaces.}
\label{sec:sobolev}\setcounter{equation}{0}

%Hausdorff operators on  Sobolev spaces $W^{k,1}(\mathbb{R}^n)$ were considered  in \cite{ZhaoGuo}.  Our approach is  different  from the approach in \cite{ZhaoGuo}. 

In this section,  $(\frak{S}, \rho, \nu)$   stands for a connected metric measure space,  and $\rm{Aut}(\frak{S})=\rm{Iso}(\frak{S})$  the group of bijective isometries of $\frak{S}$.  Equipped  with the compact-open topology $\rm{Iso}(\frak{S})$ becomes a locally compact topological group \cite[Theorem 4.7]{KobNom}.

\begin{definition}\label{mes_Iso}
We call a family $(A(u))_{u\in \Omega}\subseteq \rm{Iso}(\frak{S})$ measurable if the map $u\mapsto A(u)$ is  measurable as a map between  the measure space $(\Omega,\mu)$ and  the topological space $\rm{Iso}(\frak{S})$.
\end{definition}

Recall  (see, e.g., \cite{Heinonen})
that a Haj{\l}asz-Sobolev space $M^{1,p}(\frak{S})$ ($1\le p<\infty$) consists of such  functions $f\in L^p(\nu)$  that there exists 
$g\in L^p(\nu)$ with the property that the inequality
\begin{align}\label{Hajlasz}
|f(x)-f(y)|\le \rho(x,y)(g(x)+g(y))
\end{align}
holds for $\nu$-a.e. $x, y\in \frak{S}$. 
This space, equipped with the following norm
$$
\|f\|_M:=\|f\|_{L^p(\nu)}+\inf \|g\|_{L^p(\nu)}
$$
where infimum is taken over all $g\in L^p(\nu)$ satisfying the defining inequality \eqref{Hajlasz} becomes a Banach space. 

\begin{definition}\label{tame}
We shall call a Haj{\l}asz-Sobolev space $M^{1,p}(\frak{S})$ tame if each linear bounded functional $L$ on $M^{1,p}(\frak{S})$ has the form
\begin{align}\label{L}
L(f)=\sum_{j=1}^\infty \int_\frak{S} D_jf(x)v_j(x)d\nu(x)
\end{align}
for some sequence $D_j$ of bounded operators on $M^{1,p}(\frak{S})$  and functions $v_j\in L^{q}(\nu)$ ($1/p+1/q=1$) such that
$$
\sum_{j=1}^\infty \|D_j\|\|v_j\|_{L^q(\nu)}<\infty.
$$
\end{definition}

Before we formulate our next theorem recall that $\|\Phi\|_{A,p}$ $=\|\Phi(\cdot)m(A(\cdot))^{-1/p}\|_{L^1(\mu)}$.

\begin{theorem}\label{Sobolev} 
Let $\Omega$
 be a $\sigma$-compact quasi-metric space with  Radon
measure $\mu$. Let  $(\frak{S}, \rho, \nu)$   stands for a connected metric measure space and a  family  $(A(u))_{u\in \Omega}$ $\subseteq \rm{Iso}(\frak{S})$ be measurable and 
agrees with the measure $\nu$. The corresponding Hausdorff-type operator $\mathcal{H}_{\Phi,A}$  is bounded in a tame and   separable Haj{\l}asz-Sobolev space $M^{1,p}(\frak{S})$  ($1\le p<\infty$) if  $\Phi(\cdot)m(A(\cdot))^{-1/p}\in L^1(\mu)$, and  in this case
$$
\|\mathcal{H}_{\Phi,A}\|\le \|\Phi\|_{A,p}.
$$ 
\end{theorem}

\begin{proof} Let $\Phi(\cdot)m(A(\cdot))^{-1/p}\in L^1(\mu)$.
 We shall verify the conditions of Lemma \ref{lm1} where  $\mathcal{F}(\frak{S})=M^{1,p}(\frak{S})$, $F(u,x)=\Phi(u)f(A(u)(x))$. 
 
 (a) Since $\|f\|_{L^p(\nu)}\le \|f\|_M$, this follows from the well known F. Riesz theorem.
 
 (b) %Note that every $A\in \rm{Iso}(\frak{S})$ preserve the Lebesgue measure on $\frak{S}$.  
 For  $f\in M^{1,p}(\frak{S})$ and $A(u)$ ($u\in \Omega$) we have for a.~e. $x, y\in \frak{S}$
\begin{align*}%\label{Hajlasz2}
|f(A(u)(x))-f(A(u)(y))|\\
\le &\rho(A(u)(x),A(u)(y))(g(A(u)(x))+g(A(u)(y)))\\ \nonumber
=&\rho(x,y)(g_u(x)+g_u(y)),  \nonumber
\end{align*} 
where $g_u:=g\circ A(u)\in  L^p(\nu)$ by \eqref{IfA(u)}. Thus, $f\circ A(u)\in  M^{1,p}(\frak{S})$ for all $u\in \Omega$ and (b) holds.

(c)  We shall use the criterium of Bochner integfrability ones more  (see, e.g., \cite{Hyt}). 
%The space $W^{1,p}(\frak{S})$ is separable  (see, e.g., \cite[Theorem 3.6]{Adams}). 
Since the  space $M^{1,p}(\frak{S})$ is separable, to verify  that the $M^{1,p}(\frak{S})$-valued function $u\mapsto F(u,\cdot)$ is strongly $\mu$-measurable it suffices to prove that the $M^{1,p}(\frak{S})$-valued function $u\mapsto f\circ A(u)$  is weakly $\mu$-measurable.
%To do this, note that every linear bounded functional $L$ on $W^{1,p}(\frak{S})$ has the form
%$$
%L(f)=\sum_{j=1}^n\int_{\frak{S}}D_jf(x)v_j(x)dx
%$$
%for some  functions $v_j\in L^{p'}(\frak{S})$ ($D_j$ stands for the operator of partial derivative with respect to  $x_j$, $1/p+1/p'=1$; see, e.g., \cite[Theorem 3.19]{Adams}). 
To do this, note that, as  a family  $(A(u))_{u\in \Omega}$  is measurable and all operators $D_j$ in \eqref{L} are  continuous, the function 
$$
u\mapsto L( f\circ A(u))=\sum_{j=1}^\infty\int_{\frak{S}}D_jf(A(u)(x))v_j(x)d\nu(x)
$$
is measurable, too for each linear bounded functional $L$ on $M^{1,p}(\frak{S})$ of the form \eqref{L}. Indeed,  for every  $x\in \frak{S}$ the  map $A\mapsto A(x)$, $\rm{Iso}(\frak{S})\to \frak{S}$ is continuous, and thus the map $u\mapsto f\circ A(u)(x)$  is measurable as a composition of measurable mappings. 
 
Next, for each  $u\in \Omega$ and every $f\in M^{1,p}(\frak{S})$
\begin{align*}
\|f\circ A(u)\|_M&=\|f\circ A(u)\|_{L^p(\nu)}+\inf \|g_1\|_{L^p(\nu)}\\
&=m(A(u))^{-1/p}\|f\|_{L^p(\nu)}+\inf \|g_1\|_{L^p(\nu)}
\end{align*}
where infimum is taken over all $g_1\in L^p(\nu)$ satisfying the condition
$$
|f(A(u)(x))-f(A(u)(y))|\le\rho(x,y)(g_1(x)+g_1(y))
$$
for a.e. $x,y\in \frak{S}$. Since every such function has the form $g_1=g\circ A(u)$ where $g=g_1\circ A(u)^{-1}$ satisfies the condition \eqref{Hajlasz}, and $\|g_1\|_{L^p(\nu)}=m(A(u))^{-1/p}\|g\|_{L^p(\nu)}$ by \eqref{IfA(u)}, we have $\|f\circ A(u)\|_M=m(A(u))^{-1/p}\|f\|_M$. This implies (c) due to the criterium of Bochner integfrability,  because $\Phi(\cdot)|m(A(\cdot))^{-1/p}\in L^1(\mu)$ and  
$$
\|\Phi(u)f\circ A(u)\|_M=|\Phi(u)|m(A(u))^{-1/p}\|f\|_M
$$
 for all $u\in\Omega$.

Thus, by  Lemma \ref{lm1}, 
\begin{equation*}%\label{B}
\mathcal{H}_{\Phi,A}f =\int_{\Omega} \Phi(u)f\circ A(u)d\mu(u)
\end{equation*}
(the Bochner integral for $M^{1,p}(\frak{S})$). 
Therefore
\begin{eqnarray*}
\|\mathcal{H}_{\Phi,A}f\|_{M}& \le&\int_{\Omega} |\Phi(u)|\|\|f\circ A(u)\|_Md\mu(u)\\
&\le&\|\Phi\|_{A,p}\|f\|_M.
\end{eqnarray*}

\end{proof}

\begin{corollary}\label{Rn} (cf. \cite{MirSob}) Let  $\frak{S}$ be a  domain in $\mathbb{R}^n$ with a smooth boundary endowed with the Euclidean metric $\rho$  and Lebesgue measure $\nu$,  and a family $(A(u))_{u\in \Omega}$ $\subseteq \rm{Iso}(\frak{S})$ be measurable. If $\Phi\in L^1(\mu)$ then $\mathcal{H}_{\Phi,A}$ is bounded in a classical Sobolev space $W^{1,p}(\frak{S})$ for $1<p<\infty$.
\end{corollary}

\begin{proof}
 Indeed, in this case $W^{1,p}(\frak{S})$ is isomorphic to $M^{1,p}(\frak{S})$ with an equivalent norm (see \cite[p. 40]{Heinonen}), and thus this space is   separable  (see, e.~g., \cite[Theorem 3.6]{Adams}), and tame (see, e.~g., \cite[Theorem 3.19]{Adams}). Moreover,  each isometry $A\in \rm{Iso}(\frak{S})$ preserves the Lebesgue measure $\nu$ in $\frak{S}$.   This follows from the main result in  \cite{Zeleny} (see  p. 433  therein). Therefore  $(A(u))_{u\in \Omega}$
agrees with the measure $\nu$ with $m(A(u))\equiv 1$. Thus, all the conditions of Theorem \ref{Sobolev} are satisfied for $W^{1,p}(\frak{S}) \simeq M^{1,p}(\frak{S})$ and $\Phi$.
\end{proof}

\section{$H^1$  boundedness of  Hausdorff-type operators}
\label{H10}\setcounter{equation}{0}

\subsection{The  general case}\setcounter{equation}{0}
In this subsection we shall be working in the following setting. 
 We  assume that  $\frak{S}$ is  a   quasi-metric space with
  quasi-metric $\rho$ and positive regular Borel measure $\nu$. Moreover, the following \textit{doubling condition} holds:

  There exists of a constant $C$ such that
$$
\nu(B(x,2r))\leq C \nu(B(x,r))
$$
for each $x\in \frak{S}$ and $r > 0$. 

 (Here  and below $B(x,r)=B_\rho(x,r)$  denotes a quasi-ball  with respect to $\rho$ with a center $x$ and radius $r>0$).

In this case, the triple  $(\frak{S},\rho,\nu)$ is called  \textit{a   quasi-metric measure  space of homogeneous type} \cite{CW}.

 The \textit{doubling constant }  is the smallest constant
$C\geq  1$ for which the doubling inequality holds. We denote this constant by $C_\nu.$ Then  for each $x\in  \frak{S}, k\geq 1$ and $r > 0$
\begin{equation}\label{D}
\nu(B(x,kr))\leq C_\nu k^{s} \nu(B(x,r)),
\end{equation}
where $s=\log_2C_\nu$ (see, e.g., \cite[p. 76]{HK}). The number $s$  sometimes takes
the role of a ``dimension'' for a doubling quasi-metric measure space.

\begin{definition}\label{*}
Let $(\Omega,\mu)$ be a measure space. We say that a family of  automorphisms $(A(u))_{u\in \Omega}$  of a quasi-metric space $(\frak{S},\rho)$ \textit{agrees with the quasi-metric $\rho$}  if  there exists a $\mu$-measurable function $k(u)$ which depends on $u\in \Omega$ only, such that  for every $x\in \frak{S}$,  for every $u\in \Omega$, and for every  $r>0$
\begin{equation}\label{ast}
A(u)^{-1}(B(x,r))\subseteq B(x',k(u)r) %\eqno(ast)
\end{equation}
for some  point $x'=x'(x,u,r)\in \frak{S}$\footnote{In fact, $k(u)$ depends on $A(u)$.}.
\end{definition}

\begin{remark}\label{rem1}%ÝÒÀ ËÅÌÌÀ ÁÓÄÅÒ ÏÎÇÆÅ (ÄÀÒÜ ÒÓÒ?)
{\rm  Let $\Omega$
 be a $\sigma$-compact quasi-metric space with  Radon
measure $\mu$. If $\frak{S}=G$ is a (finite dimensional real or complex) connected Lie group
with (left or right) invariant Riemann metric $\rho$, then every automorphism $A\in \mathrm{Aut}(G)$ is
Lipschitz and  $\mathrm{Aut}(G)$ agrees with $\rho$ by \cite[Lemma 2.6]{Lie}.
}
\end{remark}

\begin{definition}\label{munu} 
Let $(\Omega,\mu)$ be a measure space.
We say that a family of  automorphisms $(A(u))_{u\in \Omega}$ of $\frak{S}$ is $\mu$-$\nu$  measurable if  for every $x\in \frak{S}$
the map $u\mapsto A(u)(x)$ from $(\Omega,\mu)$ to $(\frak{S},\nu)$ is measurable.
\end{definition}

Recall \cite{CW} that a $\nu$-measurable function $a$ on $\frak{S}$ is an $(1,q)$-\textit{atom} ($q\in (1,\infty]$) if

(i) the support of $a$ is contained in a ball $B(x, r)$;

(ii) $\|a\|_\infty \le \frac{1}{\nu(B(x,r))}$ if $q=\infty$, and

$\|a\|_q \le \nu(B(x,r))^{\frac{1}{q}-1}$  if $q\in (1,\infty)$\footnote{ $\|\cdot\|_q$ denotes the $L^q$ norm.};

(iii) $\int_G a(x)d\nu(x) = 0$.

In case $\nu(\frak{S})<\infty$ we shall assume $\nu(\frak{S})=1$; in this case  the constant function
having value $1$ is also considered to be an atom.

From now on  by atom we mean an $(1,q)$-atom.

\begin{definition} \cite[p. 592]{CW}  Let $q\in (1,\infty]$. We define the \textit{Hardy space} $H^{1,q}(\frak{S})$ as a space
of such functions $f $ on $\frak{S}$ that $f$ admits an atomic decomposition of
the form
\begin{equation}\label{3}
f =\sum_{j=1}^\infty \alpha_ja_j,  %\eqno(3)
\end{equation}
where $a_j$ are   $(1,q)$-atoms,
and $\sum_{j=1}^\infty |\alpha_j| < \infty$ (the sums \eqref{3} are convergent in the $L^1$ norm). \footnote{It is known that $H^{1,q}$ does not depend on $q\in (1,\infty]$ \cite[Theorem A, p. 592]{CW}. We write $H^{1,q}$ instead of $H^{1}$ in order to stress the fact that we use the  norm $\|\cdot\|_{H^{1,q}}$ described below.}
 In this case,
$$
\|f\|_{H^{1,q}(\frak{S})} := \inf \sum_{j=1}^\infty |\alpha_j|,
$$
and infimum is taken over all decompositions above of $f$.
\end{definition}

Since $\|a\|_{L^1}\le 1$, one has  $\|f\|_{L^{1}(\frak{S})}\le \|f\|_{H^{1,q}(\frak{S})}$  for a function $f$ in $H^{1,q}(\frak{S})$, in particular $H^{1,q}(\frak{S})\subset L^{1}(\frak{S})$.

Let $X$ be a Banach space and $(\Omega,\mu)$ be a measure space. Recall that a function $g:\Omega\to X$ is said to be $\mu$-essentially separably valued if there
exists a closed separable subspace $X_0$ of $X$ such that $g(u) \in X_0$ for $\mu$-almost
all $u\in \Omega$, and weakly $\mu$-measurable if $l^*\circ g$
 is $\mu$-measurable for every bounded linear functional $l^*$ on $X$.

\begin{definition}\label{septype} Let $(\Omega,\mu)$ be a measure space. The family of  automorphisms $(A(u))_{u\in \Omega}$ of $\frak{S}$ is said to be of separable type if for each  $f\in H^{1,q}(\frak{S})$
such that $\forall u \ f\circ A(u)\in H^{1,q}(\frak{S})$ \footnote{It will be shown in the proof of the following theorem that   $f\circ A(u)\in H^{1,q}(\frak{S})$  $\forall u \in \Omega$ and $\forall f\in H^{1,q}(\frak{S})$.} the function $u\mapsto f\circ A(u)$  is  $\mu$-essentially separably valued.
\end{definition}

\begin{remark}\label{rem2} {\rm Let $(\Omega,\mu)$ be a measure space. A family of  automorphisms $(A(u))_{u\in \Omega}$ of $\frak{S}$ is  of separable type, e.g.,  in the following tree cases:

1) the space $H^{1,q}(\frak{S})$ is separable;

2)  $\Omega$ is countable;

3)  $\Omega$ is a separable  metric space, $\mu$ is a Radon measure,  and the map  $u\mapsto f\circ A(u)$, $\Omega\to H^{1,q}(\frak{S})$ is measurable $\forall f\in H^{1,q}(\frak{S})$(this follows from \cite[Lemma 1.1.12]{Hyt}).
}
\end{remark}

If a family $(A(u))_{u\in \Omega}$ of automorphisms of $\frak{S}$
agrees with the measure $\nu$ then we put
$$
N(\Phi,A,q)=C_\nu^{1-\frac 1q}\int_{\Omega}|\Phi(u)|k(u)^{s\left(1-\frac 1q\right)}m(A(u))^{-\frac 1q}d\mu(u).
$$

\begin{theorem}\label{H1}
Let $\Omega$
 be a $\sigma$-compact quasi-metric space with positive Radon
measure $\mu$ and  let a set $\frak{S}$ be an object of some category $\mathfrak{K}$ and  in addition    $(\frak{S},\rho,\nu)$ be a   quasi-metric measure  space of homogeneous type.
% such that the space $H^{1,q}(\frak{S})$ is separable ($q\in(1,\infty]$). 
 If a  $\mu$-$\nu$- measurable family of  automorphisms $(A(u))_{u\in \Omega}$ of $\frak{S}$ (in the category $\mathfrak{K}$)  is of separable type and agrees with the quasi-metric $\rho$  and with the  measure $\nu$, and 
$N(\Phi,A,q)<\infty $,
then a Hausdorff operator  $\mathcal{H}_{\Phi, A}$ is bounded in $H^{1,q}(\frak{S})$ ($q\in(1,\infty]$) and its norm does not exceed  $N(\Phi,A,q)$. 
\end{theorem}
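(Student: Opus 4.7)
The plan is to verify the claimed bound on individual $(1,q)$-atoms and then extend to all of $H^{1,q}(\frak{S})$ by linearity, using the atomic decomposition together with the Bochner integral representation of $\mathcal{H}_{\Phi,A}$ supplied by Lemma \ref{lm1}.

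First I would fix a $(1,q)$-atom $a$ supported in a ball $B = B(x_0, r)$, with $\|a\|_q \le \nu(B)^{1/q-1}$ and $\int a\, d\nu = 0$, and study the translate $a_u(x) := a(A(u)(x))$. By Definition \ref{*} its support lies in $B^{\ast}_u := B(x', k(u)r)$, while the change-of-variable identity \eqref{IfA(u)} yields both
\begin{equation*}
\int a_u\, d\nu = m(A(u))^{-1}\int a\, d\nu = 0,\qquad \|a_u\|_q \le m(A(u))^{-1/q}\,\nu(B)^{1/q-1}.
\end{equation*}
Combining this with the ball comparison
\begin{equation*}
\nu(B^{\ast}_u) \le C_\nu k(u)^s \nu(B),
\end{equation*}
which I would obtain from doubling \eqref{D} and the quasi-triangle inequality (using that the point $y_0 := A(u)^{-1}(x_0)$ necessarily lies in $B^{\ast}_u$ because $x_0 \in B$), produces an atomic representation $a_u = \lambda(u)\tilde a_u$ in which $\tilde a_u$ is a $(1,q)$-atom on $B^{\ast}_u$ and
\begin{equation*}
\lambda(u) = \|a_u\|_q\,\nu(B^{\ast}_u)^{1-1/q} \le C_\nu^{1-1/q}\, k(u)^{s(1-1/q)}\, m(A(u))^{-1/q}.
\end{equation*}

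Next I would apply Lemma \ref{lm1} with $\mathcal{F}(\frak{S}) = H^{1,q}(\frak{S})$, whose hypothesis (a) holds thanks to the continuous embedding $H^{1,q}(\frak{S}) \hookrightarrow L^{1}(\nu)$. The $\mu$-$\nu$-measurability of $(A(u))$ combined with separability of $H^{1,q}(\frak{S})$ makes $u\mapsto \Phi(u) a_u$ strongly $\mu$-measurable into $H^{1,q}(\frak{S})$, and the domination $\|\Phi(u) a_u\|_{H^{1,q}} \le |\Phi(u)|\lambda(u)$ together with $N(\Phi,A,q) < \infty$ gives Bochner integrability. The lemma then identifies the pointwise-defined $\mathcal{H}_{\Phi,A}a$ with the Bochner integral of $u\mapsto \Phi(u) a_u$ in $H^{1,q}(\frak{S})$, so
\begin{equation*}
\|\mathcal{H}_{\Phi,A}a\|_{H^{1,q}(\frak{S})} \le \int_{\Omega}|\Phi(u)|\lambda(u)\,d\mu(u) \le N(\Phi,A,q).
\end{equation*}

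Finally I would pass to arbitrary $f \in H^{1,q}(\frak{S})$ via an atomic decomposition $f = \sum_j \alpha_j a_j$ with $\sum_j |\alpha_j| \le \|f\|_{H^{1,q}(\frak{S})} + \varepsilon$. The absolute convergence $\sum_j |\alpha_j|\,\|\mathcal{H}_{\Phi,A}a_j\|_{H^{1,q}} < \infty$ makes $\sum_j \alpha_j \mathcal{H}_{\Phi,A}a_j$ converge in $H^{1,q}(\frak{S})$, and its identification with $\mathcal{H}_{\Phi,A}f$ follows from $L^1$ convergence of the atomic sum together with $H^{1,q}\hookrightarrow L^1$; letting $\varepsilon\to 0$ concludes. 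The principal obstacle lies in the first step, namely the comparison $\nu(B^{\ast}_u) \le C_\nu k(u)^s \nu(B)$: neither the inclusion $A(u)^{-1}(B)\subseteq B^{\ast}_u$ nor the measure-agreement condition yields this directly, so one must exploit the specific point $A(u)^{-1}(x_0)\in B^{\ast}_u$ to transfer between balls centered at $x'$ and at $x_0$ via quasi-triangle and doubling. A secondary technicality is the strong $\mu$-measurability needed to invoke Lemma \ref{lm1}, which is where the separability hypothesis on $H^{1,q}(\frak{S})$ is essential.
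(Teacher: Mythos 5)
Your proposal follows essentially the same route as the paper's proof: you renormalize $a\circ A(u)$ by exactly the factor $C_\nu^{1-\frac 1q}k(u)^{s(1-\frac 1q)}m(A(u))^{-\frac 1q}$ that the paper uses for its atoms $a'_{j,u}$, you invoke Lemma \ref{lm1} with $\mathcal{F}(\frak{S})=H^{1,q}(\frak{S})$ to identify the pointwise integral with a Bochner integral (separability for strong measurability, the embedding $H^{1,q}\subset L^1$ for hypothesis (a)), and you land on the bound $N(\Phi,A,q)$. The only structural difference is that you verify everything on a single atom and then sum, whereas the paper applies Lemma \ref{lm1} to $f$ itself after establishing $\|f\circ A(u)\|_{H^{1,q}}\le C_\nu^{1-\frac 1q}k(u)^{s(1-\frac 1q)}m(A(u))^{-\frac 1q}\|f\|_{H^{1,q}}$; the paper's order of operations spares the extra interchange of $\sum_j$ with $\int_\Omega$ that your final paragraph needs (that interchange is justifiable via Tonelli once one knows $m(A(u))^{-1}\le C_\nu k(u)^s$, but it deserves a sentence). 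You also omit the paper's verification of weak measurability of $u\mapsto f\circ A(u)$ through the $BMO$ duality of \cite{CW}, which is where Definition \ref{munu} actually enters.

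The one place where you genuinely part ways with the paper is the ball comparison $\nu(B(x',k(u)r))\le C_\nu k(u)^s\nu(B(x_0,r))$, and there your proposed repair does not close. You are right to flag it: \eqref{D} compares balls with a common center, while here $x'$ and $x_0$ are a priori unrelated (the paper simply cites \eqref{D} at this step without comment). But your fix --- using $y_0:=A(u)^{-1}(x_0)\in B(x',k(u)r)$ and the quasi-triangle inequality --- only re-centers $B(x',k(u)r)$ inside a ball of comparable radius about $y_0=A(u)^{-1}(x_0)$, which is not $x_0$; after doubling you control $\nu(B(x',k(u)r))$ by $\nu(B(A(u)^{-1}(x_0),r))$, and in a general space of homogeneous type balls of equal radius at unrelated centers need not have comparable measure. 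So the announced ``transfer between balls centered at $x'$ and at $x_0$'' is precisely the step that is missing. It does hold in the motivating examples (e.g.\ a group with invariant quasi-metric and Haar measure, where $\nu(B(y,R))$ is independent of $y$), and it can be restored in general by strengthening Definition \ref{*} to require $\nu(B(x',k(u)r))\lesssim k(u)^s\nu(B(x,r))$ or an equivalent center-invariance of ball volumes; as written, neither your argument nor the stated hypotheses supply it.
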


\begin{proof}
We use the approach from \cite{JMAA}. First we are going to show that   the conditions of Lemma \ref{lm1} are fulfilled  with $\mathcal{F}(\frak{S})=H^{1,q}(\frak{S})$ and $F(u,x)=\Phi(u)f(A(u)(x))$ where $f\in H^{1,q}(\frak{S})$.

Let $1<q<\infty$. Since  for a function $f\in H^{1,q}(\frak{S})$ one has  $\|f\|_{L^{1}(\frak{S})}\le \|f\|_{H^{1,q}(\frak{S})}$, the condition (a) of the Lemma follows from the well known theorem of F.~ Riesz.

To verify conditions (b) and (c), consider  a function $f\in H^{1,q}(\frak{S})$ with an atomic representation \eqref{3}. Then
\begin{equation}\label{4}
f\circ A(u) =\sum_{j=1}^\infty \alpha_ja_j\circ A(u) ,  
\end{equation}
for all $u\in\Omega$. 
We claim that a function
$$
a'_{j,u}:=C_\nu^{\frac 1q-1}k(u)^{s(\frac 1q-1)}m(A(u))^{\frac 1q}a_j\circ A(u)
$$
is an atom, as well. 

Indeed, if an atom  $a_j$ is supported in a ball $B(x_j,r_j)$ then $a'_{j,u}$  is supported in  $A(u)^{-1}(B(x_j,r_j))\subseteq B(x'_j,k(u)r_j)$ by \eqref{ast}.

Next, since the property (ii) holds for $a_j$ , we have by \eqref{IfA(u)}
\begin{eqnarray}\label{a'}
\|a'_{j,u}\|_q&=&C_\nu^{\frac 1q-1}k(u)^{s(\frac 1q-1)}m(A(u))^{\frac 1q}\|a_j\circ A(u)\|_q\\ \nonumber
&=&C_\nu^{\frac 1q-1}k(u)^{s(\frac 1q-1)}m(A(u))^{\frac 1q}\left(\int_{\frak{S}} |a_j(A(u)(x))|^qd\nu(x)\right)^{\frac 1q}\\ \nonumber
&=&C_\nu^{\frac 1q-1}k(u)^{s(\frac 1q-1)}\|a_j\|_q\\ \nonumber
&\le&C_\nu^{\frac 1q-1}k(u)^{s(\frac 1q-1)}\nu(B(x_j,r_j))^{\frac 1q-1}.
\end{eqnarray}

On the other hand, the doubling condition \eqref{D} yields
$$
\nu(B(x'_j,k(u)r_j))\le C_\nu k(u)^s\nu(B(x_j,r_j))
$$
and therefore
$$
\nu(B(x_j,r_j))^{\frac 1q-1}\le  (C_\nu  k(u)^s)^{1-\frac 1q}\nu(B(x'_j,k(u)r_j))^{\frac 1q-1}.
$$

Now \eqref{a'} implies that
$$
\|a'_{j,u}\|_q\le \nu(B(x'_j,k(u)r_j))^{\frac 1q-1},
$$
i.e., (ii) holds for $a'_{j,u}$.

Finally, the cancellation condition (iii)  for $a'_{j,u}$ follows from  \eqref{IfA(u)} and the corresponding condition  for $a_{j}$.

Further, since for all $u\in\Omega$
$$
a_j\circ A(u)=C_\nu^{1-\frac 1q}k(u)^{s(1-\frac 1q)}m(A(u))^{-\frac 1q}a'_{j,u},
$$
formula \eqref{4} reeds as
$$
f\circ A(u) =\sum_{j=1}^\infty \left(\alpha_j C_\nu^{1-\frac 1q}k(u)^{s(1-\frac 1q)}m(A(u))^{-\frac 1q}\right)a'_{j,u}.
$$
It follows that $f\circ A(u)\in H^{1,q}(\frak{S})$ (and therefore the condition  (b) holds) and
\begin{equation}\label{c}
\|f\circ A(u)\|_{ H^{1,q}}\le \left(C_\nu^{1-\frac 1q}k(u)^{s(1-\frac 1q)}m(A(u))^{-\frac 1q}\right)\|f\|_{ H^{1,q}}.
\end{equation}

The condition  (c) holds, too. Indeed, since the family $(A(u))$ is of separable type, to verify  that the $H^{1,q}(\frak{S})$-valued function $u\mapsto f\circ A(u)$ is strongly $\mu$-measurable it suffices to prove that it is weakly $\mu$-measurable. To this end, in view of \eqref{3}, it suffices to consider the case where $f=a$ is an atom. Let $l^*$ be a linear continuous functional on $H^{1,q}(\frak{S})$. Then \cite[Theorem B]{CW} there is such a function $l\in BMO(\frak{S})$ that
$$
l^*( a\circ A(u))=\int_{\frak{S}}l(x) a(A(u)(x))d\nu(x).
$$
The map $u\mapsto l^*( a\circ A(u))$ is $\mu$-measurable, if the map $\phi(u):= a\circ A(u)(x)$ is $\mu$-measurable for each $x$. To verify the last property one can assume that $a$ is real-valued. Let $E_c=\{y\in \frak{S}: a(y)<c\}$ \ ($c\in \mathbb{R}$). Then $E_c$ is $\nu$-measurable and so  the set $\phi^{-1}((-\infty,c))=\{u\in\Omega: A(u)(x)\in E_c\}$
 is $\mu$-measurable by Definition \ref{munu}.

Now the inequality \eqref{c} and the condition $N(\Phi,A,q)<\infty$ imply that the function $u\mapsto \|F(u,\cdot)\||_{ H^{1,q}}$ is  Lebesgue $\mu$-integrable and (c) from  the Lemma \ref{lm1} holds. 

Thus, by  Lemma \ref{lm1},
\begin{equation*}%\label{B}
\mathcal{H}_{\Phi,A}f =\int_{\Omega} \Phi(u)f\circ A(u)d\mu(u)
\end{equation*}
(the Bochner integral), and therefore
\begin{eqnarray*}
\|\mathcal{H}_{\Phi,A}f\|_{ H^{1,q}}& \le&\int_{\Omega} |\Phi(u)|\|f\circ A(u)\|_{ H^{1,q}}d\mu(u)\\
&\le&N(\Phi,A,q)\|f\|_{ H^{1,q}}.
\end{eqnarray*}
The case $q=\infty$ can be treated in a similar manner.

 The proof is complete.
\end{proof}

The condition $N(\Phi, A,q)<\infty$ is not necessary for boundedness of
$\mathcal{H}_{\Phi, A}$ in $H^{1,q}$, see remark \ref{nonnes} below.

\begin{example}\label{Hankel2} {\rm The real Hardy space $H^{1,q}(\mathbb{R}_+)$ can be considered as  a subspace of $H^{1,q}(\mathbb{R})$ (we extend every function from $H^{1,q}(\mathbb{R}_+)$ to $(-\infty,0)$ as zero). Denote this  natural embedding by $J$. If $\varphi\in L^{1}(\mathbb{R})$, then  the operator $H_\varphi$ from  Example \ref{Hankel} is bounded in $H^{1,q}(\mathbb{R})$  by  Theorem \ref{H1}. It follows that an integral Hankel operator $\Gamma_\varphi=H_\varphi J$ is a bounded operator between $H^{1,q}(\mathbb{R}_+)$ and  $H^{1,q}(\mathbb{R})$ and its norm does not exceed  $2^{1-\frac{1}{q}}\|\varphi\|_{L^1}$. (Indeed, in our case $\Omega=\frak{S}=\mathbb{R}$, $C_\nu =2$, and $k(u)\equiv 1$, $m(A(u))\equiv 1$.)
}
\end{example}

\subsection{The group case}\label{group}\setcounter{equation}{0}

Below we shall  assume that $G$ is a    locally compact  group and the  group $\mathrm{Aut}(G)$  of all topological automorphisms of  $G$ is equipped   with its natural (Braconnier) topology. In this topology the sets 
$$
\mathcal{O}(C, V):=\{A\in \mathrm{Aut}(G): A(x)x^{-1}\in V,   A^{-1}(x)x^{-1}\in V \forall x\in C\} 
$$
where     $C$ runs over  all
compact subsets of $G$ and $V$ runs over all neighborhoods of the unit  $e\in G$
constitute a fundamental system of neighborhoods of the identity  (see, e.~g., \cite[(26.1)]{HiR}, \cite[Section III.3]{Hoch}).

\begin{definition}\label{measAut} Let $(\Omega,\mu)$ be a $\sigma$-compact quasi-metric space
with   positive Radon measure $\mu$. A  family of topological automorphisms $(A(u))_{u\in \Omega}$ of a locally compact  group  $G$  is called measurable  if  the map $u\mapsto  A(u)$ is measurable with respect to the measure $\mu$ and the Borel structure in the topological space $\mathrm{Aut}(G)$
\footnote{Here we specify  the notion of the measurable family of topological automorphisms  of a locally compact  group from \cite{JMAA, Lie, homogen, JOTH, Nachr}.}.
\end{definition}

\begin{theorem}\label{group2} (cf. \cite{JMAA}, \cite{Lie}).  Let $(\Omega,\mu)$ be a $\sigma$-compact quasi-metric space
with   positive Radon measure $\mu$. Let $\frak{S}=G$ be a locally compact  group with
 Haar measure $\nu$, and the topology of $G$ is generated by a quasi-metric $\rho$.  Assume that    $(G,\rho,\nu)$ is a    space of homogeneous type.
 If a measurable family of topological automorphisms $(A(u))_{u\in \Omega}$ of  $G$   is of separable type, agrees with the quasi-metric $\rho$,  and $N(\Phi,A,q)<\infty $,
then a Hausdorff operator  $\mathcal{H}_{\Phi, A}$ is bounded in $H^{1,q}(G)$ and its norm does not exceed  $N(\Phi,A,q)$. 
\end{theorem}

\begin{proof} The only conditions of Theorem \ref{H1} we need to verify are that the family  $(A(u))_{u\in \Omega}$ 
agrees with the   measure $\nu$ and that it is   $\mu$-$\nu$-measurable. 

For the proof of the first property note that in  our case we have $m(A(u))=\mathrm{mod}(A(u))$, and the map $u\mapsto \mathrm{mod}(A(u))$ is $\mu$-measurable, since the family $(A(u))_{u\in \Omega}$ is measurable, and the map $A\mapsto \mathrm{mod}A$ from  $\mathrm{Aut}(G)$ to $(0,\infty)$ is continuous (see \cite[(26.21)]{HiR}).

Finally, since for each $x\in G$ the map  $\alpha_{x}:\mathrm{Aut}(G) \to G$ sending $A$ onto $A(x)$ is continuous \cite[Proposition III.3.1, p. 40]{Hoch}, and  the family $(A(u))_{u\in \Omega}$ is measurable, it is   $\mu$-$\nu$-measurable.
\end{proof}

\begin{remark}\label{rem3} {\rm Since $\Omega$ in Theorem \ref{group2}  is  separable, and the map $u\mapsto  A(u)$ is measurable,  Lemma 1.1.12 from \cite{Hyt}  shows that $(A(u))_{u\in \Omega}$  is of  separable type if the map $A\mapsto f\circ A$,   $\mathrm{Aut}(G) \to H^{1,q}(G)$ is measurable (see the  Conjecture below).
}
\end{remark}

{\bf Conjecture.} Let $G$ be a locally compact  group with
 Haar measure $\nu$, and the topology of $G$ is generated by a quasi-metric $\rho$. If $(G,\rho,\nu)$ is a     space of homogeneous type, then the map $A\mapsto f\circ A$,   $\mathrm{Aut}(G) \to H^{1,q}(G)$ is measurable for every $f\in H^{1,q}(G)$.

\begin{example}\label{delsarte1} {\rm Consider the generalized   shift operator of Delsarte (see Example \ref{delsarte}) over a connected Lie group $G$ with  left invariant Haar measure $\nu$, and  left invariant Riemann metric $\rho$
     $$
  T^{x}f(h):=\int_\Omega f(h u(x)))d\mu(u)\quad (x\in G, h\in G).
  $$
  Let $h$ be fixed and $T_hf(x):=T^{x}f(h)$.
Then $T_h=\mathcal{H}_1S_h$, where
$$
\mathcal{H}_1f( x):=\int_\Omega f(u(x))d\mu(u)
$$
is a Hausdorff operator on $G$ with $\Phi(u)\equiv 1$, $A(u)=u$, and $S_h f(x):=f(h x)$.  Note that  $\mathrm{mod}$ is a continuous
homomorphism from $\mathrm{Aut}(G)$ to the multiplicative group $(0,\infty)$ ($\mathcal{H}_1$ is a Ces\'aro operator in the sense of \cite{JMAA}). It follows that  $\mathrm{mod}(\Omega)=\{1\}$. This  operator is bounded on $L^p(G)$ ($1\le p\le\infty$) by Proposition \ref{Lp}.

 The family $\mathrm{Aut}(G)$ agrees with $\rho$ and $k(u)=\|(du^{-1})_e\|$ by \cite[Lemma 2.6]{Lie}.   Assume that the group $(G,\rho,\nu)$ is doubling. Since $\mathrm{Aut}(G)$ is a Lie group, 
 the (compact) group  $\Omega\subset \mathrm{Aut}(G)$  is  metric.  Since $k(u)$ is continuous \cite[Chapter III, \S 10, Theorem 1]{bourblie},
$$
N(1,\Omega, q)=C_\nu^{1-\frac 1q}\int_\Omega k(u)^{s(1-\frac 1q)}d\mu(u)<\infty.
$$
Assume that the space  $H^{1,q}(G)$ is separable. Then operator $\mathcal{H}_1$
is bounded on $H^{1,q}(G)$  by Theorem \ref{group2} and $\|\mathcal{H}_{1}\|\le N(1,\Omega, q)$. Taking into account that $S_h$ is an isometry of $H^{1,q}(G)$, we conclude that
the operator $T_h$ is bounded on $H^{1,q}(G)$, too,  and $\|T_h\|\le N(1,\Omega, q)$.
}
\end{example}

\begin{remark}\label{nonnes} \cite{Lie} {\rm  The following  example shows that the condition $N(\Phi, A,q)<\infty$ is not necessary for boundedness of
$\mathcal{H}_{\Phi, A}$ in $H^{1,q}$.
%\footnote{The sufficient boundedness conditions from \cite {LL} and \cite{CZ} are also not met in this example.}. 
 Consider the Hausdorff operator
$$
(\mathcal{H}_0f)(x):=\int_{\Omega}f(u_1x_1,\dots,u_nx_n)du
$$
in $ H^{1,q}(\mathbb{R}^n)$.  Here $G=\mathbb{R}^n$, $\Omega=\{u\in\mathbb{R}^n: \min_j|u_j|=1\}$, $\mu$ and $\nu$ are  Lebesgue measures on $\Omega$   and $\mathbb{R}^n$ respectively,  $A(u)(x)=A_ux$, where $A_u=\mathrm{diag}\{u_1,\dots,u_n\}$ ($x\in \mathbb{R}^n$ a column vector, $u\in \Omega$), $\Phi\equiv 1$. The necessary moment condition  $\int_{\mathbb{R}^n}f(y)dy=0$ for functions in $H^{1,q}(\mathbb{R}^n)$  yields that $\mathcal{H}_0f=0$ for all $f\in H^{1,q}(\mathbb{R}^n)$. On the other hand, here $\mathrm{mod}A(u)=|\det A_u|=|u_1\dots u_n|$ \cite[Subsection VII.1.10, Corollary 1]{Bourb}.
Further,   taking $x=0$ in \eqref{ast} we deduce that $k(u)\ge \max_j|u_j^{-1}|=1$. Then
\begin{align*}
N(\Phi, A,q)&=\int_{\Omega}(\mathrm{mod}A(u))^{-\frac{1}{q}}k(u)^{(1-\frac{1}{q})n}du\\
&\ge\int_{\Omega}\frac{du}{|u_1\dots u_n|}=\infty.
 \end{align*}
 }
\end{remark}

\subsection{The case of homogeneous spaces of Lie groups}\label{homogen}
%\label{The case of homogeneous spaces of Lie groups}\label{homogen}\setcounter{equation}{0}

In this subsection, $G$ denotes a Lie group with right  invariant distance  $d$ which agrees with the topology of  $G$ and left Haar  measure $\lambda_G$.   Let $K$ be a compact subgroup of  $G$ with normalized Haar measure $\lambda_K$.  Consider the quotient space
$G/K$ of left cosets $\dot x:=xK=\pi_K(x)$ ($x\in G$) where $\pi_K:G\to G/K$ stands for a natural projection. 

We equip  $G/K$ with its natural metric
$$
\rho(\dot x,\dot y)=\min_{k,k'\in K}d(xk,yk')
$$
that induces the topology of $G/K$ \cite[Theorem 1.23]{MoZi}.

We shall assume that the left-$G$-invariant measure    $\nu$ on $G/K$ is normalized in such a way that (generalized) Weil's formula
\begin{equation}\label{weil}
\int_G g(x)d\lambda_G(x)=\int_{G/K}\left(\int_Kg(xk)d\lambda_K(k)\right)d\nu(\dot x)
\end{equation}
holds for all $g\in L^1(G)$  (see
\cite[Chapter VII, \S  2, no. 5, Theorem 2 ]{Bourb} and especially  remark c) after this theorem).

We need to explain what  an automorphism of a homogeneous space means.

Consider the closed subgroup
$$
 \mathrm{Aut}_K(G):=\{A\in \mathrm{Aut}(G), A(K)=K\}
 $$
of  $\mathrm{Aut}(G)$, and  the subset
\begin{eqnarray*}
\mathrm{Aut}_K^0(G)&=&\{A\in \mathrm{Aut}_K(G): A(xK)=xK \forall x\in G\}\\
&=&\{A\in \mathrm{Aut}_K(G): x^{-1}A(x)\in K \forall x\in G\}
\end{eqnarray*}
of $ \mathrm{Aut}_K(G)$.

\begin{lemma}\label{norm:closed}
The set $\mathrm{Aut}_K^0(G)$ is a closed normal subgroup of  $\mathrm{Aut}_K(G)$.
\end{lemma}

\begin{proof} It is easy to verify that  $\mathrm{Aut}_K^0(G)$ is a subgroup of  $\mathrm{Aut}_K(G)$.

The set $\mathrm{Aut}_K^0(G)$ is closed, since the evaluation  map  $\alpha_{x}:\mathrm{Aut}(G) \to G$,  $\alpha_{x}(A):=A(x)$ is continuous \cite[Proposition III.3.1, p. 40]{Hoch}. Therefore if the net $A_n\to A$ in  $\mathrm{Aut}(G)$ and  $A_n\in \mathrm{Aut}_K^0(G)$, then $x^{-1}A(x)\in K  \forall x\in G$ and thus  $A\in \mathrm{Aut}_K^0(G)$, as well.

Finally, $\mathrm{Aut}_K^0(G)$ is an invariant subgroup of $\mathrm{Aut}_K(G)$. Indeed, for every $A\in \mathrm{Aut}_K^0(G)$ and every  $A_1\in \mathrm{Aut}_K(G)$
we have
$$
A_1^{-1}AA_1(xK)=A_1^{-1}A(A_1(x)K)=A_1^{-1}A_1(x)K=xK
$$
for all $x\in G$. So, $A_1^{-1}AA_1\in \mathrm{Aut}_K^0(G)$.
\end{proof} 

\begin{definition}\label{aut}
We define the group   $ \mathrm{Aut}(G/K)$ of automorphisms of  a  homogeneous space $G/K$  as a factor-group $\mathrm{Aut}_K(G)/\mathrm{Aut}_K^0(G)$.
\end{definition}

Let an automorphism $A\in \mathrm{Aut}_K(G)$. Since
$$
A(\dot x):= A(xK)=\{A(x)A(k): k\in K\}=
A(x)K=\pi_K(A(x)),
$$
 we get a homeomorphism   $\dot A:G/K\to G/K,$  $\dot A(\dot x):=\pi_K(A(x))$ (see, e.g., \cite[Chapter I, \S 3, Propo. 9]{gentop12}). Also $\dot A(o)=o$ where $o=\dot e$.

\begin{lemma}\label{eval1}
$\mathrm{Aut}(G/K)=\{\dot A: A\in \mathrm{Aut}_K(G)\}.$

2. For each $x\in G$  the evaluation map 
$$
\varepsilon_{\dot x}: \mathrm{Aut}(G/K)\to G/K, \varepsilon_{\dot x}(\dot A)= \dot A(\dot x)
$$
is continuous.
\end{lemma}

\begin{proof}  1. Indeed, 
$$
\mathrm{Aut}_K^0(G)=\{A_0\in \mathrm{Aut}_K(G):  A_0(\dot x)=\dot x \forall x\in G\}.
$$
Then $AA_0(\dot x)=A(\dot x)=\dot A(\dot x)$ for all $x\in G$, $A_0\in \mathrm{Aut}_K^0(G)$, and $A\in \mathrm{Aut}_K(G)$.
But  an arbitrary element of $\mathrm{Aut}_K(G)/\mathrm{Aut}_K^0(G)$  has the form $A\mathrm{Aut}_K^0(G)=\{AA_0: A_0\in \mathrm{Aut}_K^0(G)\}$  where $A\in \mathrm{Aut}_K(G)$. 
 It follows that we can identify the groups under consideration.

2. Since the map  $\alpha_{x}:\mathrm{Aut}(G) \to G$ sending $A$ onto $A(x)$ is continuous \cite[Proposition III.3.1, p. 40]{Hoch},  the map  $\beta_{x}:\mathrm{Aut}_K(G) \to G/K$ sending $A$ onto $\pi_K A(x)=(A(x))^{{\bf\cdot}}$ is continuous, too. Let $\phi$ denotes the  natural projection $\mathrm{Aut}_K(G)\to \mathrm{Aut}(G/K)$, $\phi(A)=\dot A$. Then
 $\beta_{x}=\varepsilon_{\dot x} \phi$ and thus the map $\varepsilon_{\dot x} $  is continuous (see, e.g., \cite[Chapter 1, \S 3]{gentop12}).
\end{proof}

\begin{example} (Hausdorff operators over the hyperbolic plane \cite{homogen}.) {\rm As is well known, one can  identify the hyperbolic plane $\mathbb{H}^2$ with the homogeneous space ${\rm SL}(2)/{\rm  SO}(2)$. In this case a Hausdorff-type operator looks as follows:
$$
(\mathcal{H}_{\Phi}f)(z) 
=\int_{0}^{2\pi} \Phi(\theta)f(R^\theta(z))d\mu(\theta),\quad z\in \mathbb{H}^2,
$$
where the M\"{o}bius transformation
$$
R^\theta(z):=\frac{z\cos\theta+\sin\theta}{-z\sin\theta+\cos\theta}
$$
 induces a hyperbolic rotation of the half-plane $\mathbb{H}^2$ by the angle $2\theta$ about $\imath$ (see, e.~g., \cite[Lemma 9.19]{St}).
}
\end{example}

For other examples of Hausdorff-type operators on  homogeneous spaces of groups see \cite{Lie}, \cite{homogen}.

Now we are going to apply Theorem \ref{H1}  to the space  $H^{1,q}(G/K)$.

\begin{lemma}\label{nontriv}
  If $G\ne K$ then  the space  $H^{1,q}(G/K)$ is nontrivial.
\end{lemma}

\begin{proof}  We prove that   nontrivial atoms exist. Fix a point $x_0\in G\setminus K$. Then $x_0K\cap K=\varnothing$. Since the compact set  $x_0K\cup K$ is contained in some compact ball $B_d$,  the function
$$
h(x):=
\begin{cases}
1 \ \mbox{ for } x\in K,\\
-1 \mbox{ for } x\in x_0K,\\
0  \ \mbox{ for } x\in G\setminus(x_0K\cup K)
\end{cases}
$$
is supported by  $B_d$ and $\int_Gh(x)d\lambda_G(x)=0$. Consider the function
$$
a(x):=c\int_K h(xk)d\lambda_K(k)
$$
where $c$ is a positive  constant.  Then $a$ is right-$K$-invariant in a sense that $a(xk')=a(x)$ for all $x\in G, k'\in K$. Note that, since  $h$ is supported in  $B_d$,  then $a(x)=0$ for $x\notin B_dK$. Since $B_d$ is compact,
 the  set $B_dK$ is compact, as well, and therefore it is  contained in some  ball $B_d'=B_d(e,r)$.  Thus, $a$  is supported by $B_d'$.  It follows that the function
$$
\dot a(\dot x):=a(x)
$$
on $G/K$ is supported by a ball $B_\rho(\dot e,r)$.  Indeed, 
we have 
$$
\rho(\dot x,\dot y)=\min_{k,k'\in K}d(xk,yk')=\min_{k\in K}d(xk,y)
$$ 
by the right invariance of $d$. Then $\rho(\dot x,\dot x_0)<r$ $\Longleftrightarrow$ $d(xk,x_0)<r$
for some $k\in K$. In other words, for every $x_0\in G$, $r>0$ one has $\dot x\in B_\rho(\dot x_0, r)$  $\Longleftrightarrow$  $xk\in B_d(x_0, r)$ for some $k\in K$. This means that
\begin{equation}\label{BK}
\dot x\in B_\rho(\dot x_0, r) \Longleftrightarrow x\in  B_d(x_0, r)K.
 \end{equation}
Now if $\dot a(\dot x)\ne 0$, i.~e. $a(x)\ne 0$, then  $x\in B_d'$ which  implies $xK\subseteq B_d' K$.  By \eqref{BK} this means that $\dot x\in B_\rho(\dot e, r)$.

Now we choose $c>0$ such that
 (ii) holds  for $\dot a$ (and for $B_\rho$ instead of $B$).

 The property (iii) for $\dot a$ follows from
 \begin{align*}
  \int_{G/K}\!\dot a(\dot x)d\nu(\dot x)&= \int_{G/K}\!a(x)d\nu(\dot x)\\
  &=c\!\!\int_{G/K}\!\int_K\!h(xk)d\lambda_K(k)d\nu(\dot x)\\
  &=  c\int_{G}h(x)d\lambda_G(x)=0.
  \end{align*}
So, the function $\dot a$ belongs to $H^{1,q}(G/K)$ and  $\dot a(\dot e)=a(K)=c\ne 0$.
\end{proof}

\begin{definition}\label{Kdoubl} Let $G, d, K, \lambda_G$ be as above. We call a group $(G,d)$ $K$-doubling if there is such constant $C>0$ that for all $x_0\in G$ and $r>0$ we have
\begin{equation}\label{K-doubling}
\lambda_G(B_d(x_0,2r)K)\le C \lambda_G(B_d(x_0,r)K).
 \end{equation}

\end{definition}

\begin{theorem}\label{g/k} (cf. \cite{Lie}).  Let $(\Omega,\mu)$ be a $\sigma$-compact quasi-metric space
with   positive Radon measure $\mu$ and $G$ a connected Lie group with right  invariant distance $d$ and left Haar  measure $\lambda_G$.   Assume that  a group  $(G,d)$ is $K$-doubling,  the space $H^{1,q}(G/K)$ is  separable ($q\in (1,\infty]$), and a family  $(\dot A(u))_{u\in \Omega}\subseteq \mathrm{Aut}(G/K)$ is  measurable.
If $N(\Phi,\dot A,q)<\infty $,
then a Hausdorff operator  $\mathcal{H}_{\Phi, \dot A}$ is bounded in $H^{1,q}(G/K)$ and its norm does not exceed  $N(\Phi,\dot A,q)$. 
\end{theorem}

\begin{proof} To verify the conditions of Theorem \ref{H1}  we split the proof into several steps.

Step 1. Since by our assumptions the map $u\mapsto \dot A(u)$ is measurable, and the map $\varepsilon_{\dot x}$ is continuous by Lemma \ref{eval1}, the family  $(\dot A(u))_{u\in \Omega}$ is
$\mu$-$\nu$-measurable.

Step 2.  It
 follows from \eqref{BK} that (below $1_E$ denotes the indicator of the  set $E$)
$$
\nu(B_\rho(\dot x_0, r)) =\int_{G/K}1_{B_\rho(\dot x_0, r)}(\dot x)d\nu(\dot x)=\int_{G/K}1_{B_d(x_0, r)K}(x)d\nu(\dot x).
$$
Putting in the last integral $x=yk$ where $k\in K$  we have
$$
\nu(B_\rho(\dot x_0, r)) =\int_{G/K}1_{B_d(x_0, r)K}(yk)d\nu(\dot y).
$$
On the other hand, $1_{B_d(x_0, r)K}(yk)=1_{B_d(x_0, r)K}(y)$ does not depend on $k\in K$, and therefore
$$
1_{B_d(x_0, r)K}(yk)=\int_K 1_{B_d(x_0, r)K}(yk)d\lambda_K(k).
$$
This yields in view of \eqref{weil} that
\begin{eqnarray*}
\nu(B_\rho(\dot x_0, r))&=&\int_{G/K}\left(\int_K 1_{B_d(x_0, r)K}(yk)d\lambda_K(k)\right)d\nu(\dot y)\\
&=&\int_G 1_{B_d(x_0, r)K}(y)d\lambda_G(y)=\lambda_G(B_d(x_0, r)K).\nonumber
\end{eqnarray*}

The last equality implies in particular that  $(G/K,\rho,\nu)$ is a    space of homogeneous type, since  $(G,d)$ is $K$-doubling.

Step 3. It is known  \cite[Lemma 2.6]{Lie} that every automorphism $A\in \mathrm{Aut}(G)$ is
Lipschitz with a Lipschitz constant $\|(dA)_e\|$. It follows that  each measurable family $(\dot A(u))_{u\in \Omega}\subseteq \mathrm{Aut}(G/K)$ agrees with $\rho$. Indeed, by this lemma for a  measurable function $k(u)=\|(dA(u))_e\|$ we have  
$$d(A(u)^{-1}(p),A(u)^{-1}(q))\le k(u)d(p,q)$$
 for all $p,q\in G$. Putting here $p=A(u)(x)$, $q=A(u)(y)$ we get $d(x,y)\le k(u)d(A(u)(x),A(u)(y)$ for all $x,y\in G$. This implies that 
\begin{eqnarray}\label{++}
 A(u)^{-1}(B_d(x,r))\subseteq B_d(x',k(u)r)
 \end{eqnarray}
  where $x'=A(u)^{-1}(x)$.

  In turn, the last inclusion implies that 
 $$
\dot A(u)^{-1}(B_\rho(\dot x,r))\subseteq B_\rho(\dot x',k(u)r)
 $$
because
\begin{eqnarray*}
\dot y\in \dot A(u)^{-1}(B_\rho(\dot x,r))&\Longrightarrow&  \dot A(u)\dot y\in  B_\rho(\dot x,r)\\
&\Longrightarrow& A(u)y\in  B_d(x,r)K\\
&\Longrightarrow& y\in  A(u)^{-1}(B_d(x,r))K\subseteq B_d(x',k(u)r)K\\
&\Longrightarrow&  \dot y\in  B_\rho(\dot x',k(u)r)
\end{eqnarray*}
by \eqref{BK}. Thus, $(\dot A(u))_{u\in \Omega}$ agrees with $\rho$.

Step 4. Its remains to show that  $(\dot A(u))_{u\in \Omega}$  agrees with the measure $\nu$. To this end note that the natural projection $\pi_K:G\to G/K$ is perfect (see, e.~g.,\cite[Ch. III, \S 4, Corollary 2 of Proposition 1]{gentop}).  In particular, for every compact $\dot C\subseteq G/K$ the set $C:=\pi_K^{-1}(\dot C)$ is compact in $G$. Putting $g=1_{C}$ (the indicator of $C$) in \eqref{weil} we get
\begin{eqnarray*}
\int_G 1_C(x)d\lambda_G(x)&=&\int_{G/K}\left(\int_K1_C(xk)d\lambda_K(k)\right)d\nu(\dot x)\\
&=&\int_{G/K}\left(\int_K1_{\dot C}(\dot x)d\lambda_K(k)\right)d\nu(\dot x)=\nu(\dot C).
\end{eqnarray*}
In other words, $\lambda_G(\pi_K^{-1}(\dot C))=\nu(\dot C)$ for all  compact sets $\dot C\subseteq G/K$. Since both sides of this equality are regular Borel measures, it is valid for all 
Borel sets $\dot C\subseteq G/K$ of finite measure. Therefore
\begin{eqnarray*}
\nu(\dot A(u)^{-1}(\dot C))&=&\lambda_G(\pi_K^{-1}(\dot A(u)^{-1}(\dot C)))\\
&=&\lambda_G(A(u)^{-1}(\pi_K^{-1}(\dot C)))\\
&=&\lambda_G(A(u)^{-1}(C))\\
&=&\mathrm{mod}A(u)^{-1}\lambda_G(C)\\
&=&\mathrm{mod}A(u)^{-1}\nu(\dot C)).
\end{eqnarray*}
So, since  the family  $(\dot A(u))_{u\in \Omega}$ is  measurable, it  agrees with the measure $\nu$, and  $m(\dot A(u))=\mathrm{mod}A(u)$.

Thus, all the conditions of Theorem \ref{H1} are satisfied, and the   result follows. 
\end{proof}

%\begin{Remark} The space $H^{1,q){G/K)$ is nontrivial and Banach (see, cite{Lie}, \cite{homogen}).

%\end{Remark}

\subsection{The case of double coset spaces  of  Lie groups }\label{double}

In this subsection, $G$ stands for a Lie group with a Haar measure $\lambda_G$, and $K$ is a compact subgroup of  $G$ with the normalized Haar measure $\lambda_K$. We  assume that the topology of $G$ is generated by a  metric  $d$ which is $K$-bi-invariant in a sense that
$$
d(k_1xk_1',k_1yk_1')=d(x,y)
$$
for all $x,y\in G$, $k_1, k_1' \in K$.

  We denote by $\ddot x$ the element
$KxK$ ($x\in G$) of the double coset space $G/\!/K$ (the quotient  space of $G$ with respect to the equivalence relation $x\sim y:=y\in KxK$). Let $p_K:G\to G/\!/K$ stands for the natural projection. Recall that a natural topology $\tau(G/\!/K)$ by definition is the strongest topology for $p_K$ to be continuous. It is clear that $p_K$ is open.
% and if $E \subseteq G/\!/K$ is compact then there is a compact set $F\subseteq G$ with $p_K(F)=E$ \cite[Lemma 2.2]{AminiMedghalchi}.

\begin{lemma}\label{pKperfect}
The projection $p_K$ is a perfect map.
\end{lemma}

\begin{proof} First note that $p_K$ is a closed map because for each closed set $F\subseteq G$ its saturation $KFK$ with respect to the equivalence relation $x\sim y$ is closed due to the compactness of $K$ (see, e.g., \cite[Ch. I, \S  5, no. 2]{gentop12}). Since $p_K^{-1}(\ddot y)=KyK$ is compact for each $\ddot y\in G/\!/K$, the statement of the lemma follows  (see, e.g., \cite[Ch. I, \S  10, Theorem 1]{gentop12}).
\end{proof}

We equip  $G/\!/K$ with the metric
$$
\rho(\ddot x,\ddot y):=\min_{k_1k_1',k_2k_2'\in K}d(k_1xk_1',k_2yk_2')=\min_{k,k'\in K}d(x,kyk').
$$

\begin{lemma}\label{tauGK}
The function $\rho$ is a  metric in $G/\!/K$ that induces the natural topology $\tau(G/\!/K)$ of the factor space $G/\!/K$.
\end{lemma}

\begin{proof} For every $\epsilon>0$, $K$ contains elements $a,a',b,b'$ such that 
\begin{align*}
\rho(\ddot x,\ddot y)+\rho(\ddot y,\ddot z)+2\epsilon
&\ge d(x,aya')+d(y,bzb')\\
&\ge d(a^{-1}xa'^{-1}x,y)+d(y,bzb')\\
&\ge d(a^{-1}xa'^{-1}x,bzb')\\
&\ge \rho(\ddot x,\ddot z).
\end{align*}
This shows that $\rho$ satisfies the triangle inequality. It can be 
verified that $\rho$ is single-valued in $(G/\!/K)\times (G/\!/K)$, that $\rho$ is  
symmetric, and that $\rho(\ddot x, \ddot y) = 0$ if and only if $\ddot x=\ddot y$. Then 
$\rho$ is a metric. 

Further, since
$$
\rho(p_K(x),p_K(y))=\rho(\ddot x,\ddot y)\le d(x,y),
$$
the map $p_K:(G,d)\to (G/\!/K,\rho)$  is continuous. Thus, the  topology $\tau(G/\!/K)$ is stronger, then the  topology $\tau_\rho$ induced by $\rho$. 
On the other hand, $KyK\in KB_d(x,\epsilon)K$ if $\rho(\ddot x,\ddot y)<\epsilon$. 
Therefore any neighborhood of $\ddot x$ in the  topology $\tau(G/\!/K)$ includes a  neighborhood in a sense of $\tau_\rho$. 
It follows that $\tau_\rho$ is stronger, then $\tau(G/\!/K)$.
\end{proof}

Now we are going to explain what  an automorphism of a double coset space means. We proceed along the lines of the previous section.

Consider the subset
\begin{eqnarray*}
\mathrm{Aut}_K^{00}(G)&=&\{A\in \mathrm{Aut}_K(G): A(KxK)=KxK \forall x\in G\}\\
&=&\{A\in \mathrm{Aut}_K(G): A(x)\sim x \forall x\in G\}
\end{eqnarray*}
of $ \mathrm{Aut}_K(G)$.

\begin{lemma}\label{norm:closed00}
The set $\mathrm{Aut}_K^{00}(G)$ is a closed normal subgroup of  $\mathrm{Aut}_K(G)$.
\end{lemma}

\begin{proof} It is easy to verify that  $\mathrm{Aut}_K^{00}(G)$ is a subgroup of  $\mathrm{Aut}_K(G)$.

The set $\mathrm{Aut}_K^{00}(G)$ is closed, since the evaluation  map  $\alpha_{x}:\mathrm{Aut}(G) \to G$,  $\alpha_{x}(A):=A(x)$ is continuous \cite[Proposition III.3.1, p. 40]{Hoch}. Therefore if the net $A_n\to A$ in  $\mathrm{Aut}(G)$ and  $A_n\in \mathrm{Aut}_K^{00}(G)$, i.e.  $A_n(x)\in KxK  \forall x\in G$, then $A(x)\in KxK  \forall x\in G$, i.e., $A(x)\sim x \forall x\in G$ and thus  $A\in \mathrm{Aut}_K^{00}G)$, as well.

Finally, $\mathrm{Aut}_K^{00}(G)$ is an invariant subgroup of $\mathrm{Aut}_K(G)$. Indeed, for every $A\in \mathrm{Aut}_K^{00}(G)$ and every  $A_1\in \mathrm{Aut}_K(G)$
we have
$$
A_1^{-1}AA_1(KxK)=A_1^{-1}A(KA_1(x)K)=A_1^{-1}(KA_1(x)K)=KxK
$$
for all $x\in G$. So, $A_1^{-1}AA_1\in \mathrm{Aut}_K^{00}(G)$.
\end{proof} 

\begin{definition}\label{aut00}
We define the group   $ \mathrm{Aut}(G/\!/K)$ of automorphisms of  a  homogeneous space $G/\!/K$  as a factor-group $\mathrm{Aut}_K(G)/\mathrm{Aut}_K^{00}(G)$.
\end{definition}

Let  $A\in \mathrm{Aut}_K(G)$. Since
$$
A(\ddot x):= A(KxK)=\{A(k)A(x)A(k'): k, k'\in K\}=
KA(x)K=p_K(A(x)),
$$
 we get a homeomorphism   
 $$
 \ddot A:G/\!/K\to G/\!/K, \ \ddot A(\ddot x):=p_K(A(x))
 $$ 
 (see, e.g., \cite[Chapter I, \S 3, Propo. 9]{gentop12}). Also $\ddot A(o)=o$ where $o=\ddot e$.

\begin{lemma}\label{eval100}
1. $\mathrm{Aut}(G/\!/K)=\{\ddot A: A\in \mathrm{Aut}_K(G)\}.$

2. For each $x\in G$  the evaluation map 
$$
\varepsilon_{\ddot x}: \mathrm{Aut}(G/\!/K)\to G/\!/K, \varepsilon_{\ddot x}(\ddot A)= \ddot A(\ddot x)
$$
is continuous.
\end{lemma}

\begin{proof}  1. Indeed, 
$$
\mathrm{Aut}_K^{00}(G)=\{A_0\in \mathrm{Aut}_K(G):  A_0(\ddot x)=\ddot x \forall x\in G\}.
$$
Then $AA_0(\ddot x)=A(\ddot x)=\ddot A(\ddot x)$ for all $x\in G$, $A_0\in \mathrm{Aut}_K^{00}(G)$ and $A\in \mathrm{Aut}_K(G)$.
But  an arbitrary element of $\mathrm{Aut}_K(G)/\mathrm{Aut}_K^{00}(G)$  has the form $A\mathrm{Aut}_K^{00}(G)=\{AA_0: A_0\in \mathrm{Aut}_K^{00}(G)\}$  where $A\in \mathrm{Aut}_K(G)$. 
 It follows that we can identify the groups under consideration.

2. Since the map  $\alpha_{x}:\mathrm{Aut}(G) \to G$ sending $A$ onto $A(x)$ is continuous \cite[Proposition III.3.1, p. 40]{Hoch},  the map  $\beta_{x}:\mathrm{Aut}_K(G) \to G/\!/K$ sending $A$ onto $p_K A(x)=(A(x))^{{\bf\cdot \cdot}}$ is continuous, too. Let $\phi$ denotes the  natural projection $\mathrm{Aut}_K(G)\to \mathrm{Aut}(G/\!/K)$, $\phi(A)=\ddot A$. Then
 $\beta_{x}=\varepsilon_{\ddot x} \phi$ and thus the map $\varepsilon_{\ddot x} $  is continuous (see, e.g., \cite[Chapter 1, \S 3]{gentop12}).
\end{proof}

\begin{definition}\label{KKdoubl} Let $G, d, K, \lambda_G$ be as above. We call a group $(G,d)$ $KK$-doubling if there is such constant $C>0$ that for all $x_0\in G$ and $r>0$ we have
\begin{equation}\label{K-doubling}
\lambda_G(KB_d(x_0,2r)K)\le C \lambda_G(KB_d(x_0,r)K).
 \end{equation}
\end{definition}

%We call a function $g:G\to \mathbb{C}$ \textit{bi-$K$-invariant} if $g(kxk')=g(x)$ for all $k,k'\in K, x\in G$.
%  In this case we put
%$$
%\ddot g(\ddot x):=g(x).
%$$
%This definition is correct.
%Indeed, if $\ddot x_1=\ddot x$, then $x_1=kxk'$ for some $k,k'\in K$
%and thus  $g(x)=g(kxk')=g(x_1)$   for any bi-$K$-invariant function $g$ on $G.$ Therefore, we have a one-to-one correspondence $g\mapsto\ddot g$ between bi-$K$-invariant functions  on $G$ and functions on $G/\!/K.$

Note \cite[Theorems 8.2B, (14.2H)]{Jew} that $X=G/\!/K$ is a double coset hypergroup (convo)  with Haar measure
$$
\nu=\int_G\delta_{p_K(x)}d\lambda_G(x)
$$ 
(the Bochner integral). The last equality means that $\nu$ is the image $p_K(\lambda_G)$ in a sense of  \cite[Ch. V, \S 6, Definition 1]{Bourb1}. In particular,
\begin{eqnarray}\label{nuE}
\nu(E)=\lambda_G(p_K^{-1}(E))
\end{eqnarray}
for all $\nu$-measurable $E\subseteq G/\!/K$ \cite[Ch. V, \S 6, Corollary1 of Proposition 2]{Bourb1}.

\begin{lemma}\label{nontriv2} Let   $G\ne K$. Then the space $H^{1,q}(G/\!/K)$ is nontrivial .
\end{lemma}

The proof of this lemma  is similar to the proof of Proposition 2 in \cite{JOTH}.

For the space $H^{1,q}(G/\!/K)$  Theorem \ref{H1}  looks as follows.

\begin{theorem}\label{g//k} (cf. \cite{JOTH}). Let $(\Omega,\mu)$ be a $\sigma$-compact quasi-metric space
with   positive Radon measure $\mu$ and $G$ a Lie group with $K-$bi-invariant distance $d$ and left Haar  measure $\lambda_G$.   Assume that  the group  $(G,d)$ is $KK$-doubling,  the space $H^{1,q}(G/\!/K)$ is  separable ($q\in (1,\infty]$), and a family  $(\ddot A(u))_{u\in \Omega}\subseteq \mathrm{Aut}(G/\!/K)$ is  measurable.
If $N(\Phi,\ddot A,q)<\infty $,
then a Hausdorff operator  $\mathcal{H}_{\Phi, \ddot A}$ is bounded in $H^{1,q}(G/\!/K)$ and its norm does not exceed  $N(\Phi,\ddot A,q)$. 
\end{theorem}

\begin{proof} To verify the conditions of Theorem \ref{H1}  we split the proof into several steps.

Step 1. Since by our assumptions the map $u\mapsto \ddot A(u)$ is measurable, and the map $\varepsilon_{\ddot x}$ is continuous by Lemma \ref{eval100}, the family  $(\ddot A(u))_{u\in \Omega}$ is $\mu$-$\nu$-measurable.

Step 2. First note that
\begin{eqnarray}\label{pKB}
p_K^{-1}(B_\rho(\ddot x_0,r))=p_K(B_d(x_0,r)).
\end{eqnarray}
Indeed,
\begin{eqnarray*}
x\in p_K^{-1}(B_\rho(\ddot x_0,r))&\Longleftrightarrow \ddot x\in B_\rho(\ddot x_0,r)\\
&\Longleftrightarrow  \rho(\ddot x,\ddot x_0)<r \Longleftrightarrow \min\limits_{k,k'\in K}d(kxk',x_0)<r\\
 &\Longleftrightarrow d(kxk',x_0)<r \mbox{  for some } k,k'\in K\\
&\Longleftrightarrow kxk'\in B_d(x_0,r) \mbox{  for some } k,k'\in K\\
 &\Longleftrightarrow x\in KB_d(x_0,r)K=p_K(B_d(x_0,r)).
\end{eqnarray*}
  It
 follows from \eqref{nuE} that 
$$
\lambda_G(KB_d(x_0,r)K)=\lambda_G(p_K(B_d(x_0,r))=\lambda_G(p_K^{-1}(B_\rho(\ddot x_0,r))=\nu(B_\rho(\ddot x_0,r)).
$$

The last equality implies in particular that  $(G/\!/K,\rho,\nu)$ is a    space of homogeneous type, since  $(G,d)$ is $KK$-doubling.

 Step 3. We are going to prove that  each measurable family $(\ddot A(u))_{u\in \Omega}\subseteq \mathrm{Aut}(G/\!/K)$ agrees with $\rho$. 
Indeed,  the inclusion \eqref{++} and the equality \eqref{pKB} imply that for each $\ddot A(u)\in \mathrm{Aut}(G/\!/K)$ we have for $x\in G$, $r>0$
$$
\ddot A(u)^{-1}(B_\rho(\ddot x,r))\subseteq B_\rho(\ddot x', k(u)r))
$$
where $k(u)=\|(dA(u))_e\|$.

In fact, by  the equality \eqref{pKB}
\begin{eqnarray*}
\ddot y\in \ddot A(u)^{-1}(B_\rho(\ddot x,r)) \Longrightarrow&\ddot A(u)(\ddot y)\in B_\rho(\ddot x,r)\\
&\Longleftrightarrow p_K(A(u)(y))\in B_\rho(\ddot x,r)\\
&\Longrightarrow A(u)(y)\in  p_K^{-1}(B_\rho(\ddot x,r))\\
&=p_K(B_d(x,r))=K(B_d(x,r)K.
\end{eqnarray*}
It follows that
\begin{eqnarray*}
y\in  A(u)^{-1}(K(B_d(x,r)K)=KA(u)^{-1}(B_d(x,r))K.
\end{eqnarray*}
Therefore  by the inclusion \eqref{++} we have
$$
\ddot y\in A(u)^{-1}(B_d(x,r))\subseteq B_\rho(\ddot x', k(u)r)).
$$
  Thus, $(\ddot A(u))_{u\in \Omega}$ agrees with $\rho$.

 Step 4. Its remains to show that  $(\ddot A(u))_{u\in \Omega}$  agrees with the Haar measure $\nu$.

Since  the natural projection $p_K:G\to G/\!/K$ is perfect, for every compact $\ddot C\subseteq G/\!/K$ the set $C:=p_K^{-1}(\ddot C)$ is compact in $G$. 
The set $C$ is saturated with respect to the equivalence relation $\sim$ and therefore the set $A(C)$ is also saturated with respect to the  relation $\sim$ for all  $A\in \mathrm{Aut}_K(G)$, too.
It follows that 
$$p_K^{-1}(\ddot A^{-1}(\ddot C))=p_K^{-1}(p_K(A^{-1}(C)))=A^{-1}(C).
$$
Now formula \eqref{nuE} yields 
\begin{eqnarray*}
\nu(\ddot A(u)^{-1}(\ddot C))&=&\lambda_G(p_K^{-1}(\ddot A(u)^{-1}(\ddot C)))\\
&=&\lambda_G(A(u)^{-1}(\pi_K^{-1}(\dot C)))\\
&=&\lambda_G(A(u)^{-1}(C))\\
&=&\mathrm{mod}A(u)^{-1}\lambda_G(C)\\
&=&\mathrm{mod}A(u)^{-1}\nu(\ddot C)).
\end{eqnarray*}
Since the both sides  of the last equality are regular Borel measures, it is valid for all Borel $\ddot C\subseteq G/\!/K$ of finite measure.
Therefore, taking into account that   the family  $(\dot A(u))_{u\in \Omega}$ is  measurable, it  agrees with the measure $\nu$, and  $m(\ddot A(u))=\mathrm{mod}A(u)$.

Thus, all the conditions of Theorem \ref{H1} are satisfied, and the   result follows. 
\end{proof}

%\section{acknowledgments}
%The author is partially supported by the State Program of Scientific Research
%of Republic of Belarus, project No. 20211776
 %and by the Ministry of Education and Science of Russia,  agreement No. 075-02-2023-924.

\section{Data availability statement}
The author confirms that all data generated or analyzed during this study
are included in this article.
This work does not have any conflicts of interest.

%Department of Mathematics and Programming Technologies, Francisk Skorina Gomel State University, Gomel, 246019, Belarus $\&$ Regional Mathematical Center, Southern Federal University, Rostov-on-Don, 344090, Russia.

%Data accessibility. This article has no additional data.

%Declaration of AI use. I have not used AI-assisted technologies in creating this article.

%Author's contributions. A.M.: conceptualization, data curation, formal analysis, funding acquisition, investigation,
%methodology, project administration, resources, supervision, writing original draft, writing review and editing.

%Conflict of interest declaration. I declare I have no competing interests.

%Funding. This work is supported in part by the State Program of Scientific Research of Republic of Belarus,
%project no. 20211776; and by the Ministry of Education and Science of Russia, agreement no. 075-02-2024-1427.

\end{document}